\numberwithin{equation}{section}
\theoremstyle{plain}
\newtheorem{thm}{Theorem}
\newtheorem*{thm-main}{Main Theorem}
\newtheorem{lem}{Lemma}
\newtheorem{dfn}{Definition}
\newtheorem{cor}{Corollary}
\newtheorem{prop}{Proposition}
\newtheorem{conj}{Conjecture}
\newtheorem*{conjecture}{Conjecture}
\newtheorem{rem}{Remark}
\newtheorem{notation}{Notation}
\newcommand{\reg}[1]{\regularity{#1}}
\DeclareMathOperator{\regularity}{reg}
\DeclareMathOperator{\proj}{proj}
\DeclareMathOperator{\adj}{adj}
\DeclareMathOperator{\spec}{spec}
\DeclareMathOperator{\Mat}{Mat}
\DeclareMathOperator{\Ima}{Im}
\DeclareMathOperator{\p}{\mathfrak{p}}
\DeclareMathOperator{\Stab}{Stab}
\DeclareMathOperator{\en}{end}
\DeclareMathOperator{\mm}{\mathfrak{m}}
\begin{document}
\title{Equations of some embeddings of a projective space into another one}
\author{Marc Chardin}
\address{Institut de math\'{e}matiques de Jussieu, CNRS \& Sorbonne Universit\'e, 4 place Jussieu, 75005 Paris , France}
\email{marc.chardin@imj-prg.fr}

\author{Navid Nemati}
\address{ Universit\'{e} C\^{o}te d'Azur, Inria, 2004 route des Lucioles, 06902 Sophia Antipolis, France}
\email{navid.nemati@inria.fr}
\subjclass[2010]{13D02, 14E05, 13A30}
\keywords{Birational map, blowup algebra, Castelnuovo-Mumford regularity, Jacobian dual matrix}
\begin{abstract}
In \cite{EHU}, Eisenbud, Huneke and Ulrich conjectured a result on the Castelnuovo-Mumford regularity of the embedding of a projective space $\mathbb{P}^{n-1}\hookrightarrow \mathbb{P}^{r-1}$ determined by generators of a linearly presented $\mm$-primary ideal. This result implies in particular that the image is scheme defined by equations of degree at most $n$. In this text we prove that the ideal of maximal minors of the Jacobian dual matrix associated to the input ideal defines the image as a scheme; it is generated in degree $n$. Showing that this ideal has a linear resolution would imply that the conjecture in \cite{EHU} holds. Furthermore, if this ideal of minors coincides with the one of the image in degree $n$ - what we hope to be true - the linearity of the resolution of this ideal of maximal minors is equivalent to the conjecture in \cite{EHU}.
\end{abstract}
\maketitle
\section{Introduction and Preliminaries}
Let $\phi: \mathbb{P}^{n-1}\dashrightarrow \mathbb{P}^{r-1}$ be a rational map defined by homogeneous  forms $f_1,\dots, f_r$ and $I$ be the ideal generated by these forms. The Rees algebra associated to the ideal $I$, $\mathcal{R}(I)$, gives the bihomogeneous  coordinate ring of the closure of the graph of $\phi$ inside $\mathbb{P}^{n-1}\times \mathbb{P}^{r-1}$. The special fiber ring $\mathcal{F}(I)$ is the coordinate ring of the closed image of $\phi$. The rings $\mathcal{R}(I)$ and $\mathcal{F}(I)$ are \textit{blowup rings} associated to $I$. It is a fundamental problem to find the defining equations of the Rees ring from generators of $I$, for significant classes of ideals. This problem has been studied extensively by commutative algebraists, algebraic geometers, and, more recently, by applied mathematicians in geometric modeling (see \cite{BCS, Cox, KUP2, KUP1, KUP3}). Rees algebras provide an algebraic realization for the concept of blowing up a variety along a subvariety.

Let $S= k[x_1,\dots,x_n]$ be a polynomial ring over a field $k$ and $I$ be an ideal of $S$. The Rees algebra $\mathcal{R}(I)$ of an ideal $I$ is defined as 
$$
\mathcal{R}(I)= S[It]= S\oplus It\oplus I^2t^2\oplus \cdots .
$$
The equations of the Rees algebra are elements in  the kernel of the epimorphism \linebreak$\Phi : S[T_1, \dots , T_r]\rightarrow \mathcal{R}(I)$ given by $\Phi(T_i)= f_it$ where $I= (f_1,\dots, f_r)$. These equations do depend on the choice of generators of $I$; however, their number and degrees do not, if one chooses minimal generators of a graded ideal $I$. 

An important object for the study of Rees algebras is the symmetric algebra, $\text{Sym}(I)$, of an ideal $I$. It could be defined as
$$
\text{Sym}(I)= S[T_1,\dots,T_r]/\mathcal{L},
$$
where $\mathcal{L}= ([T_1,\dots,T_r]\cdot M)$ and $M$ is a presentation matrix of $I$. The map $\Phi$ above
factors through the symmetric algebra. To determine equations of $\mathcal{R}(I)$  of degrees at least two in the $T_i$'s is equivalent to study the kernel of the
map $\text{Sym}(I)\rightarrow  \mathcal{R}(I)$. Traditionally, techniques for computing the defining ideal of
$\mathcal{R}(I)$ often revolved around the notion of \textit{Jacobian dual}. 

For a polynomial ring $S$ 
and an ideal $I$ with a presentation $S^N\xrightarrow{M} S^r\rightarrow I \rightarrow 0$, the \textit{Jacobian dual} of $M$ is defined to be a matrix $\Theta(M)$ (we use $\Theta$ if the context makes no confusion possible) with linear entries in $S[T_1, \dots , T_r]$ such that
$$
[T_1,\dots , T_r]\cdot M = [x_1,\dots , x_n]. \Theta(M).
$$
In this article, we focus on the Jacobian dual matrix of linearly presented $\mm$-primary ideals. The main theorem of this paper is the following:
\begin{thm-main}
Let $S=k[x_1,\dots ,x_n]$ be a polynomial ring over a field $k$, $I=(f_1,\dots,f_r)$ be a linearly presented $\mm$-primary ideal of $S$ and $\Theta$ be the Jacobian dual matrix of a presentation matrix of $I$. 
Then the image of the map $\mathbb{P}^{n-1}\rightarrow \mathbb{P}^{r-1}$ defined by the forms $f_1,\dots f_r$ is scheme-theoretically defined by the ideal of maximal minors of $\Theta$.
\end{thm-main}
It is worth mentioning that it is not hard to show that the image is defined as a set by the ideal of maximal minors (see Lemma \ref{radical}); the more delicate point is to show that the image is scheme-theoretically defined by the ideal of maximal minors.
As a consequence, the image of $\Phi$ is scheme-theoretically defined by equation of degree $n$. We finish the section by presenting two conjectures about the ideal of maximal minors of $\Theta$.

In the last section, we present the connection between our work and  a conjecture by Eisenbud, Huneke, and Ulrich in \cite{EHU}.
\begin{conjecture}\cite[Conjecture 1.1]{EHU}
Let $S= k[x_1,\dots ,x_n]$ and $I$ be an ideal of $S$. If $I$ is a linearly presented $\mm$-primary ideal generated in degree $d$, then $I^{n-1}= \mm^{d(n-1)}$. Equivalently, 
$$
\reg(I^{n-1}) = (n-1)d.
$$
\end{conjecture}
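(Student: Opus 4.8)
The plan is to prove one containment at the level of ideals, then upgrade the set-theoretic equality of Lemma~\ref{radical} to a scheme-theoretic one by a local analysis along the image. Write $\mathfrak a=\ker\bigl(k[T_1,\dots,T_r]\to\mathcal F(I)\bigr)$ for the (saturated, prime) ideal of the image $Y=\overline{\phi(\mathbb P^{n-1})}$, and let $I_n(\Theta)$ denote the ideal of maximal ($n\times n$) minors of $\Theta$, which lies in $k[T_1,\dots,T_r]$ because the entries of $\Theta$ are $k$-linear forms in the $T_i$.

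First I would record the containment $I_n(\Theta)\subseteq\mathfrak a$. Applying $\Phi\colon S[T]\to\mathcal R(I)$, $T_i\mapsto f_it$, to the defining relation $[T_1,\dots,T_r]\,M=[x_1,\dots,x_n]\,\Theta$ yields $[x_1,\dots,x_n]\,\Phi(\Theta)=t\,[f_1,\dots,f_r]\,M=0$ in $\mathcal R(I)$, since the columns of $M$ are syzygies of the $f_i$. As $\mathcal R(I)$ is a domain and $(x_1,\dots,x_n)\neq 0$, the rows of $\Phi(\Theta)$ are dependent over $\mathcal R(I)$, so every $n\times n$ minor of $\Theta$ maps to $0$; hence $I_n(\Theta)\subseteq\ker\Phi\cap k[T]=\mathfrak a$. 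Combined with Lemma~\ref{radical}, which gives $\sqrt{I_n(\Theta)}=\mathfrak a$, this shows that $\mathfrak a$ is the unique minimal prime over $I_n(\Theta)$ and reduces the theorem to showing that the surjection $k[T]/I_n(\Theta)\twoheadrightarrow\mathcal F(I)$ is an isomorphism away from the irrelevant ideal $\mm_T=(T_1,\dots,T_r)$, i.e. that $I_n(\Theta)^{\mathrm{sat}}=\mathfrak a$.

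The core is then local and splits into two tasks: (i) \emph{generic reducedness}, namely $I_n(\Theta)_{\mathfrak a}=\mathfrak a_{\mathfrak a}$ at the generic point of $Y$; and (ii) \emph{unmixedness off the vertex}, namely that $k[T]/I_n(\Theta)$ has no associated prime other than $\mathfrak a$ and possibly $\mm_T$. For (i) I would work over $\mathcal O_{Y,\mathfrak a}$ and show that the rank of $\Theta$ drops from $n$ to exactly $n-1$ transversally along $Y$, the kernel being spanned by the specialization of $(x_1,\dots,x_n)$; equivalently, that the cokernel of $\Theta$ restricts on $Y$ to a line bundle, so that the $n\times n$ minors generate $\mathfrak a$ rather than a proper power locally. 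For (ii) I would produce enough depth---for instance via a Buchsbaum--Rim or approximation complex resolving the cokernel of $\Theta$, or via finiteness of $\phi$ over the smooth locus of $Y$---to exclude embedded primes away from $\mm_T$. Together (i) and (ii) give $I_n(\Theta)^{\mathrm{sat}}=\mathfrak a$, so that the image is cut out scheme-theoretically by forms of degree $n$.

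The main obstacle is precisely steps (i)--(ii), because $I_n(\Theta)$ is a strikingly non-generic determinantal ideal: its minors vanish in codimension $r-n$, whereas the generic rank-deficiency locus of an $n\times N$ matrix has codimension $N-n+1$, which is typically far larger (already for $I=\mm^2$ in three variables one has $N=8$, giving generic codimension $6$ against the required $3$). Consequently the Eagon--Northcott machinery and its attendant Cohen--Macaulayness and regularity statements are unavailable, and the special geometry forced by the relation $[x_1,\dots,x_n]\,\Theta=0$ must be exploited directly to control both the multiplicity along $Y$ and the absence of embedded components.
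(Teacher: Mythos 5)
There is a genuine gap, and it is structural: the statement you were asked to prove is Conjecture \ref{main conjecture} itself, namely $I^{n-1}=\mm^{d(n-1)}$, equivalently $\reg(I^{n-1})=(n-1)d$, which the paper does not prove --- it is open except for $n=3$ or monomial $I$ (Remark \ref{proved cases}). Your proposal instead targets the paper's Main Theorem, $I_n(\Theta)^{\mathrm{sat}}=I(W)$ (Theorem \ref{main-sat}), and even if you completed your steps (i) and (ii) you would only have shown that the image $W$ is scheme-theoretically cut out by forms of degree $n$. That does not yield the conjecture: by Proposition \ref{upper bound regularity} the conjecture is equivalent to $\Stab(I)\leq n-1$, i.e.\ to $\reg I(W)\leq n$, and a saturated ideal can have regularity far exceeding the generating degree of an ideal that defines it scheme-theoretically. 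What is actually needed is $\reg(I_n(\Theta))\leq n$, i.e.\ that $I_n(\Theta)$ has a \emph{linear resolution} (Conjecture \ref{conj-linear resolution}); the chain $\reg I(W)\leq\reg I_n(\Theta)\leq n$ combined with Proposition \ref{upper bound regularity} is exactly the paper's conditional argument in Theorem \ref{Thm- conjectures}(1). Your closing sentence about ``forms of degree $n$'' therefore stops one step short of the conjecture, and that missing step is precisely its open content.

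Within the part you do address, the argument is moreover a plan rather than a proof. The inclusion $I_n(\Theta)\subseteq I(W)$ via the Rees-algebra specialization is correct, but your (i) (transversal rank drop along $Y$) and (ii) (no embedded primes off the vertex) are exactly what you concede to be ``the main obstacle,'' and no working mechanism is supplied --- as you yourself observe, Buchsbaum--Rim and Eagon--Northcott techniques fail because $I_n(\Theta)$ is wildly non-generic in codimension. The paper resolves this differently: rather than ``generic reducedness plus unmixedness,'' it proves that $(B/I_n(\Theta))_{\p}$ is a \emph{domain} at every relevant prime $\p\supseteq I_n(\Theta)$. Concretely, Lemmas \ref{fiber} and \ref{rank} give that $\Theta_{\p}$ has rank exactly $n-1$ (using that $I$ is $\mm$-primary, so the symmetric algebra scheme-defines the graph $\Gamma$ and $\pi_2\colon\Gamma\to W$ has zero-dimensional, in fact single reduced point, fibers), and the explicit adjugate identities of Lemma \ref{adj}, Lemma \ref{3lemmas} and Corollary \ref{corollary} show that after inverting a suitable $\Delta_i$ the symmetric algebra localizes to a polynomial ring $\bar{B}_{\p}[x_i]$, so $\bar{B}_{\p}$ inherits reducedness and irreducibility from the stalk of the isomorphism $\Gamma\to W$. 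That Cramer-rule localization is the mechanism your (i)--(ii) lack; but even with it in hand, you would have reproved Theorem \ref{main-sat}, not the conjecture, which still requires the unproven linearity of the resolution of $I_n(\Theta)$.
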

In particular, we show that this conjecture holds if the ideal of maximal minors of $\Theta$ has a linear resolution (see Theorem \ref{Thm- conjectures} ).

\section{Jacobian dual matrix}
For the rest of the article we adopt the following notations.
\begin{notation}\label{Notation}
Let $S=k[x_1,\dots, x_n]$ be a polynomial ring over a field $k$ and $I=(f_1,\dots,f_r)$ be a linearly presented $\mm$-primary ideal, $\phi : \mathbb{P}^{n-1}\dashrightarrow \mathbb{P}^{r-1}$ be the map
defined by $\phi (\textbf{x})= (f_1(\textbf{x}), \cdots , f_r(\textbf{x}))$ and denote 
the graph of $\phi$ by $\Gamma\subseteq \mathbb{P}^{n-1}\times \mathbb{P}
^{r-1}$. Set $\pi_1: \Gamma \rightarrow \mathbb{P}^{n-1} $ and $\pi_2: 
\Gamma \rightarrow \mathbb{P}^{r-1} $ for the two natural projections and $W:= \Ima(\pi_2)$. We write $B=k[T_1,\dots,T_r]$. 
The defining ideal $J$ of the symmetric algebra $\text{Sym}(I)$ is generated by the bilinear forms $L_i(\textbf{x},\textbf{T})= \sum 
a_i(\mathbf{x})T_i$ where $\sum a_i(\mathbf{x})f_i=0$ is 
a linear syzygy of $I$.  The Jacobian dual $\Theta$ is given by 
$$
\Theta
\begin{bmatrix}
x_1\\
\vdots\\
x_n\\
\end{bmatrix}
=
\begin{bmatrix}
L_1\\
\vdots\\
L_N\\
\end{bmatrix},
$$
where the $L_i$'s are minimal generators of $J$ (equivalently, the corresponding syzygies are minimal generators of the module of syzygies). Thus $\Theta$ is a matrix whose entries are linear forms in the $T_i$'s and we set $W':=\proj (\bar{B})$ where $\bar{B}= B/I_n(\Theta)$ and $I_n(\Theta)$ is the ideal of maximal minors of $\Theta$ (the $n\times n$ minors). Finally, if $p\in W\subseteq W'$ and $\mathfrak{p}$ is the corresponding homogeneous 
prime in $\proj(B)$, we denote by $M_p$ the image of a matrix $M$ by the natural map obtained by applying the map $B\rightarrow B_{\mathfrak{p}}/I_r (M)_{\mathfrak{p}}$ to its entries, where $r$ is the rank of $M$.
\end{notation}
\begin{dfn}
Let $M$ be a $p\times q$ matrix over $B$. For a prime ideal $\p\supseteq I_q (M)$, consider the natural map $B\rightarrow (B/I_q(M))_{\p}$ and its natural extension $\Mat_{p\times q}(B)\rightarrow \Mat_{p\times q}((B/I_q(M))_{\p})$. We write $M_{\p}$ for the image of $M$ under this last map.
\end{dfn}
\begin{lem}\label{radical}
With the above notations,  $\sqrt{I_n(\Theta)}= I(W)$.
\end{lem}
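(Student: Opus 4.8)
The plan is to establish the set-theoretic equality $V(I_n(\Theta)) = W$ inside $\mathbb{P}^{r-1}$ (working over $\bar{k}$): since $\sqrt{I_n(\Theta)}$ is radical by construction and $I(W)$ is the ideal of a variety, the Nullstellensatz then yields $\sqrt{I_n(\Theta)} = I(W)$. First I would record that because $I$ is $\mm$-primary we have $\sqrt{I}=\mm$, so the base locus $V_+(I)$ is empty in $\mathbb{P}^{n-1}$; hence $\phi$ is a morphism and $W=\Ima(\phi)$ is already closed.

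The mechanism is the defining identity $\Theta\cdot[x_1,\dots,x_n]^{t}=[L_1,\dots,L_N]^{t}$. Specializing the $T_i$ to a point $\mathbf{T}_0\in\mathbb{P}^{r-1}$, the matrix $\Theta(\mathbf{T}_0)$ has scalar entries and $\Theta(\mathbf{T}_0)\cdot\mathbf{x}_0=\mathbf{L}(\mathbf{x}_0,\mathbf{T}_0)$ for every $\mathbf{x}_0$, where $\mathbf{L}=(L_1,\dots,L_N)$. Thus $\mathbf{T}_0\in V(I_n(\Theta))$, i.e.\ $\mathrm{rank}\,\Theta(\mathbf{T}_0)<n$, holds exactly when there is a nonzero $\mathbf{x}_0$ with $\mathbf{L}(\mathbf{x}_0,\mathbf{T}_0)=0$; equivalently $\mathbf{T}_0\in\pi_2(V(J))$, where $V(J)\subseteq\mathbb{P}^{n-1}\times\mathbb{P}^{r-1}$ is the zero locus of the defining ideal $J$ of $\text{Sym}(I)$.

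The inclusion $W\subseteq V(I_n(\Theta))$ is then immediate: for $\mathbf{x}_0\neq 0$ each generator $L(\mathbf{x},\mathbf{T})=\sum_j a_j(\mathbf{x})T_j$ of $J$ comes from a syzygy $\sum_j a_j f_j=0$, so $L(\mathbf{x}_0,\phi(\mathbf{x}_0))=\sum_j a_j(\mathbf{x}_0)f_j(\mathbf{x}_0)=0$, whence $\Theta(\phi(\mathbf{x}_0))\cdot\mathbf{x}_0=0$ with $\mathbf{x}_0\neq0$ and $\phi(\mathbf{x}_0)\in V(I_n(\Theta))$.

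The reverse inclusion carries the real content, and I expect it to be the main obstacle. Given $\mathbf{T}_0\in V(I_n(\Theta))$, choose $\mathbf{x}_0\neq0$ with $\mathbf{L}(\mathbf{x}_0,\mathbf{T}_0)=0$; this says $\mathbf{T}_0$ lies in the kernel of the linear map $\mathbf{T}\mapsto\mathbf{L}(\mathbf{x}_0,\mathbf{T})$, i.e.\ of the transpose of the presentation matrix $M$ evaluated at $\mathbf{x}_0$. The vector $\phi(\mathbf{x}_0)$ lies in the same kernel and is nonzero. The key point is that this kernel is one-dimensional: its dimension equals $r-\mathrm{rank}\,M(\mathbf{x}_0)=\dim_{\bar{k}}\mathrm{coker}\,M(\mathbf{x}_0)=\mu(I_{\mathfrak{p}})$, the minimal number of local generators of $I$ at the prime $\mathfrak{p}$ of $\mathbf{x}_0$. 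Since $I$ is $\mm$-primary and $\mathfrak{p}\neq\mm$, one has $I\not\subseteq\mathfrak{p}$, so $I_{\mathfrak{p}}=S_{\mathfrak{p}}$ is principal and $\mu(I_{\mathfrak{p}})=1$. Hence the kernel is $\langle\phi(\mathbf{x}_0)\rangle$ and $\mathbf{T}_0=\phi(\mathbf{x}_0)\in W$, giving $V(I_n(\Theta))\subseteq W$ and the desired equality. The only delicate step is this fiber-dimension computation, whose entire force rests on the $\mm$-primary hypothesis trivializing the local number of generators away from $\mm$.
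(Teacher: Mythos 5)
Your argument is correct, and its outer skeleton matches the paper's proof: both reduce the lemma to the set-theoretic equality $V(I_n(\Theta))=W$ via the specialization identity $\Theta(p)\cdot\mathbf{x}=\mathbf{L}(\mathbf{x},p)$, prove the easy inclusion $W\subseteq V(I_n(\Theta))$ by evaluating the syzygies at $(\mathbf{x}_0,\phi(\mathbf{x}_0))$, and finish with the Nullstellensatz. Where you genuinely diverge is the reverse inclusion, which carries the content. The paper disposes of it in one line by quoting that, since $I$ is $\mm$-primary, the symmetric algebra ideal $J$ scheme-defines (in particular set-defines) $\Gamma$, so any $(x,p)$ with all $L_i(x,p)=0$ already lies on $\Gamma$ and $p\in\Ima(\pi_2)=W$. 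You instead prove this imported fact from scratch: $\mathbf{T}_0$ and $\phi(\mathbf{x}_0)$ both lie in the left kernel of the scalar matrix $M(\mathbf{x}_0)$, and that kernel is one-dimensional because $\operatorname{coker}M(\mathbf{x}_0)\cong I\otimes_S\kappa(\mathbf{x}_0)$ has dimension $\mu(I_{\mathfrak{q}})=1$ at the (non-irrelevant) prime $\mathfrak{q}$ of $\mathbf{x}_0$, where $\mm$-primariness forces $I_{\mathfrak{q}}=S_{\mathfrak{q}}$. In effect you verify by linear algebra that the fiber of $V(J)$ over each point of $\mathbb{P}^{n-1}$ is the single point $\phi(\mathbf{x}_0)$, which is exactly the statement the paper cites. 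Your route is more elementary and self-contained, and it isolates precisely where the hypothesis acts (triviality of $I$ off the irrelevant ideal); the paper's route is shorter and records the scheme-theoretic form of the same fact, which it needs again in the proof of Theorem \ref{main-sat}. Two points worth making explicit in your write-up: the identification $\operatorname{coker}M(\mathbf{x}_0)\cong I\otimes_S\kappa(\mathbf{x}_0)$ requires $M$ to be a full presentation matrix of $I$, not merely a matrix of some linear syzygies --- this is guaranteed here because $I$ is linearly presented, so the minimal presentation consists exactly of the linear syzygies defining $\Theta$; and the passage from the set-theoretic equality over $\bar{k}$ back to $\sqrt{I_n(\Theta)}=I(W)$ over $k$ descends harmlessly, since for any ideal $\mathfrak{a}\subseteq B$ one has $\sqrt{\mathfrak{a}(B\otimes_k\bar{k})}\cap B=\sqrt{\mathfrak{a}}$ and $I(W)$ is radical.
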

\begin{proof}
As $I$ is $\mathfrak{m}$-primary, $\Gamma\subseteq \mathbb{P}^{n-1}\times \mathbb{P}^{r-1}$ is scheme defined by the symmetric algebra, and is in particular, set defined by $J$. Consider the projection $\pi_2: \mathbb{P}^{n-1}\times \mathbb{P}^{r-1} \rightarrow \mathbb{P}^{r-1}$ and $p= (t_1:\dots : t_{r})\in \mathbb{P}^{r-1}$. Let $\Theta(p)$ be the image of $\Theta$ by the specialization $T_i\mapsto t_i$. Then,
\begin{align*}
p\in W & \Leftrightarrow \exists x= (x_1,\dots ,x_n) ; (x,p)\in \Gamma\\
&\Leftrightarrow \exists x; L_i(x,t)=0 \, \forall i\\
& \Leftrightarrow \exists x ; \Theta (p)
\begin{bmatrix}
x_1\\
\vdots\\
x_n\\
\end{bmatrix}=0
\\
&\Leftrightarrow t\in I_n(\Theta).
\end{align*}
\end{proof}
\begin{lem}\label{fiber}
Assume that $\psi :\mathbb{P}^{n-1}\setminus X \rightarrow \mathbb{P}^{r-1}$ is
defined by $I=(f_1,\dots ,f_r)$ where $X=V(I)$. Let $\Gamma\subseteq \mathbb{P}^{n-1}\times \mathbb{P}^{r-1}$ be the closure of the graph of $\psi$. If $\pi : \Gamma \rightarrow \overline{\Ima (\psi)}$ is the projection map, then $ \forall p\in \overline{\Ima (\psi)}$, setting $\dim \emptyset :=-1$,
$$
\dim \pi^{-1}(p)\leq \dim X+1.
$$
\end{lem}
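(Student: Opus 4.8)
The plan is to transport the fiber $\pi^{-1}(p)$ back to the source $\mathbb{P}^{n-1}$ through the first projection, and then to control its dimension by cutting it with a single hypersurface attached to one of the forms $f_i$.

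First I would fix a point $p=(p_1:\cdots:p_r)\in\overline{\Ima(\psi)}$ and consider the first projection $\pi_1\colon\Gamma\to\mathbb{P}^{n-1}$, setting $Y_p:=\pi_1(\pi^{-1}(p))=\{x\in\mathbb{P}^{n-1}:(x,p)\in\Gamma\}$. Because $\pi_1$ restricts to an isomorphism from the slice $\mathbb{P}^{n-1}\times\{p\}$ onto $\mathbb{P}^{n-1}$, the fiber $\pi^{-1}(p)$ maps isomorphically onto $Y_p$, so $\dim\pi^{-1}(p)=\dim Y_p$ and it suffices to bound $\dim Y_p$. Since the graph of $\psi$ over $\mathbb{P}^{n-1}\setminus X$ is contained in the closed set $V(\{f_iT_j-f_jT_i\}_{i,j})$, so is its closure $\Gamma$; evaluating these bilinear forms at $T=p$ shows $Y_p\subseteq \tilde Z_p:=\{x\in\mathbb{P}^{n-1}: p_jf_i(x)=p_if_j(x)\ \text{for all }i,j\}$. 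Thus everything reduces to proving $\dim\tilde Z_p\le \dim X+1$.

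For this key step I would, after renumbering the generators, assume $p_1\neq 0$. Then on $\tilde Z_p$ the vanishing of $f_1$ forces the vanishing of every $f_i$ (since $p_1 f_i=p_i f_1$), so that $\tilde Z_p\cap V(f_1)=\tilde Z_p\cap X$. Now take any irreducible component $C$ of $\tilde Z_p$. If $C\subseteq X$ then $\dim C\le\dim X$. If $C\not\subseteq X$ and $\dim C\ge 1$, then $f_1$ does not vanish identically on $C$ (otherwise $C\subseteq \tilde Z_p\cap V(f_1)=\tilde Z_p\cap X\subseteq X$); hence the hypersurface section $C\cap V(f_1)$ is a nonempty proper closed subset of $C$ of dimension exactly $\dim C-1$, and it lies in $X$, giving $\dim C-1\le\dim X$. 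The remaining case $\dim C=0$ is immediate since $\dim X\ge -1$. Taking the maximum over all components yields $\dim\tilde Z_p\le\dim X+1$, and the lemma follows.

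The step I expect to be the main obstacle is the dimension count in the last paragraph: one must guarantee that the hypersurface $V(f_1)$ genuinely meets every positive-dimensional component $C$ (this is where the projective dimension theorem enters, using $\deg f_1\ge 1$) and that the section drops the dimension by exactly one, while at the same time identifying that section with a subset of the base locus $X$. One also has to be careful with the degenerate conventions, namely $\dim\emptyset=-1$ when $X=V(I)$ is empty (the $\mathfrak m$-primary case), where the bound specializes to the statement that all fibers of $\pi$ are finite.
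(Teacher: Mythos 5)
Your proof is correct and takes essentially the same approach as the paper: both transport the fiber to $\mathbb{P}^{n-1}$, observe that it lies in the locus cut out by the relations $p_jf_i-p_if_j$, split off the components contained in $X$, and bound the remaining components by noting that the single form $f_i$ with $p_i\neq 0$ cuts such a component down into $X$ with a dimension drop of one. The paper phrases this last step algebraically, via $I\subseteq I(V)+(f_i)$ and Krull's height theorem, whereas you use the projective dimension theorem on a hypersurface section; this is only a cosmetic difference.
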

\begin{proof}
Let $p = (p_1:\dots : p_r)\in \overline{\Ima \psi}$ and let $V$ be an irreducible component of $\pi^{-1}(p)$. If $V$ is contained in $X$ it has dimension at most the one of $X$. Else $\psi |_{V}: V\setminus X\rightarrow \mathbb{P}^{r-1}$ is a constant map with image $p$. Hence for all $1\leq i,j\leq r$, $(f_ip_j-f_jp_i)\in I(V)$. Choose $i$ with $p_i\neq 0$, then $f_j- p_jf_i/p_i\in I(V)$ for all $j$, and therefore $I\subseteq I(V)+(f_i)$, and the height of $I(V)+(f_i)$ is the one of $I(V)$ plus one.
\end{proof}
\begin{lem}\label{rank}
Adopt Notation \ref{Notation}. For a point $p\in W\subseteq W'$, the rank of $\Theta_{\p}$ is $n-1$.
\end{lem}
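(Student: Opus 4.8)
The plan is to prove the equality $\operatorname{rank}\Theta_{\p}=n-1$ by two one-sided bounds, of which the upper one is immediate and the lower one contains all the content. Recall that $\Theta$ is an $N\times n$ matrix, so by the Definition above $\Theta_{\p}$ is its image over $\bar B_{\p}=(B/I_n(\Theta))_{\p}$, where $I_n(\Theta)$ is the ideal of maximal ($n\times n$) minors. Since $p\in W$ and, by Lemma~\ref{radical}, $W=V(\sqrt{I_n(\Theta)})$, we have $I_n(\Theta)\subseteq\p$; hence every $n\times n$ minor of $\Theta$ maps to $0$ in $\bar B_{\p}$, so $I_n(\Theta_{\p})=0$ and $\operatorname{rank}\Theta_{\p}\le n-1$. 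The whole difficulty is the reverse inequality, which amounts to showing that some $(n-1)\times(n-1)$ minor survives as a unit in $\bar B_{\p}$, i.e. that $I_{n-1}(\Theta)\not\subseteq\p$.

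For this lower bound I would first reduce to closed points. An inclusion $I_{n-1}(\Theta)\subseteq\p$ would place $\p$ in $V(I_{n-1}(\Theta))\cap W$, so it suffices to prove $V(I_{n-1}(\Theta))\cap W=\emptyset$. As $W$ is a variety over the field $k$, hence Jacobson, a nonempty closed subset always contains a closed point; therefore it is enough to verify that no closed point $q\in W$ satisfies $I_{n-1}(\Theta)\subseteq\mathfrak q$, equivalently that $\operatorname{rank}\Theta(q)=n-1$ for every closed point $q\in W$, where $\Theta(q)$ is the specialization $T_i\mapsto q_i$.

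To compute this specialization rank I would reuse the identification from the proof of Lemma~\ref{radical}. Writing $L_i(\mathbf x,\mathbf T)=(\Theta\,\mathbf x)_i$, a point $x$ lies in $\pi_2^{-1}(q)$ precisely when $\Theta(q)\,x=0$, so $\pi_2^{-1}(q)=\mathbb P(\ker\Theta(q))$ as a set. Because $I$ is $\mm$-primary we have $V(I)=\emptyset$, so $\dim V(I)=-1$ and Lemma~\ref{fiber} gives $\dim\pi_2^{-1}(q)\le\dim V(I)+1=0$; thus $\dim_k\ker\Theta(q)\le1$ and $\operatorname{rank}\Theta(q)\ge n-1$. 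Combined with $\operatorname{rank}\Theta(q)\le n-1$ (all $n\times n$ minors vanish at $q\in W=V(I_n(\Theta))$), this yields $\operatorname{rank}\Theta(q)=n-1$, hence $I_{n-1}(\Theta)\not\subseteq\mathfrak q$ and the reduction is complete. Consequently $I_{n-1}(\Theta)\not\subseteq\p$; picking an $(n-1)\times(n-1)$ minor $\delta\notin\p$, its image in $\bar B_{\p}$ is a unit, so $I_{n-1}(\Theta_{\p})=\bar B_{\p}\ne0$ and $\operatorname{rank}\Theta_{\p}=n-1$.

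The main obstacle is the passage from the pointwise ranks of the specializations $\Theta(q)$ to the rank of the single matrix $\Theta_{\p}$ over the local ring $\bar B_{\p}$ for an arbitrary, possibly non-closed, $p\in W$; this is exactly what the Jacobson reduction resolves, converting a statement about one localization into a check at closed points, where Lemma~\ref{fiber} applies. The one point to watch is that the fiber be nonempty, so that Lemma~\ref{fiber} yields $\dim_k\ker\Theta(q)\le1$ rather than a vacuous bound; this holds automatically since $q\in W=\Ima(\pi_2)$ forces $\ker\Theta(q)\ne0$.
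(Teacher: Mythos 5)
Your proof is correct and follows essentially the same route as the paper: both arguments use Lemma \ref{fiber} (with $X=V(I)=\emptyset$, so fibers have dimension at most $0$) together with the identification of the fiber $\pi_2^{-1}(p)$ with the projectivized kernel of the specialized matrix $\Theta(p)$, forcing that kernel to be one-dimensional and hence the rank to be exactly $n-1$. Your explicit reduction to closed points via the Jacobson property, and your separation of the trivial upper bound $I_n(\Theta_{\p})=0$ from the essential lower bound $I_{n-1}(\Theta)\not\subseteq\p$, are careful refinements that the paper's two-line proof leaves implicit, but they do not change the substance of the argument.
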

\begin{proof}
By Lemma \ref{fiber}, $\pi^{-1}(p)$  is a zero dimensional scheme. The assertion follows from the fact that this fiber is a linear space defined by the system of equations 
$$
\Theta_{\p}(p)\begin{bmatrix}
x_1\\
\vdots\\
x_n\\
\end{bmatrix}=0.
$$ 
\end{proof}
\begin{rem}
Since $\reg I^t= dt+0$ for $t\gg 0$, the regularity of fibers of the projection \linebreak$\pi_2: \Gamma \rightarrow \mathbb{P}^{r-1}$  at any point is zero \cite{EH}. The only finite set of point(s) with zero regularity is a  single reduced point. More generally, the regularity of stalks of $\pi_2$ at any point is zero too \cite{CH}.
\end{rem}
\begin{dfn} \label{delta}
Let $N$ be a $m\times n$ matrix. For $1\leq r\leq n$ and $1\leq s\leq m$, define  $N^{(i_1,\dots i_r)}_{(j_1,\dots, j_s)}$ be the matrix obtained by deleting $i_1,\dots,i_r$-th columns and $j_1,\dots j_s$-th rows of $N$. Let $N$ be a $(n-1)\times n$ matrix, denote $\Delta_i(N)$ to be the $(-1)^i \det N^{(i)}$, with $ N^{(i)}$ the  matrix obtained by deleting $i$-th column of $N$. For simplicity we write $\Delta_i$ instead of $\Delta_i(N)$ if it is clear what matrix we consider.
\end{dfn}
\begin{lem}\label{adj}
Let $N$ be a $(n-1)\times n$ matrix, then

$$
(-1)^{i}\adj(N^{(i)})N=\left[\begin{array}{ccc|c|ccc}
\Delta_{i}&&&-\Delta_1&&&\\
&\ddots&&\vdots&&&\\
&&\Delta_{i}&-\Delta_{i-1}&&&\\ \hline
&&&-\Delta_{i+1}&\Delta_{i}&&\\
&&&\vdots&&\ddots&\\
&&&-\Delta_n&&&\Delta_{i}\\
\end{array}	\right] ,
$$
where only the non zero terms are displayed.
	\end{lem}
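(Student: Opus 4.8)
The plan is to compute the product one column at a time, exploiting the fundamental identity $\adj(A)\,A=\det(A)\,\mathrm{Id}$ valid for any square matrix $A$. Write $c_1,\dots,c_n$ for the columns of $N$, so that $N^{(i)}$ is the $(n-1)\times(n-1)$ matrix with columns $c_1,\dots,c_{i-1},c_{i+1},\dots,c_n$. The $j$-th column of $\adj(N^{(i)})\,N$ is $\adj(N^{(i)})\,c_j$, and I would split the computation according to whether $j\neq i$ (so that $c_j$ survives as a column of $N^{(i)}$) or $j=i$ (the deleted column).

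For $j\neq i$ the vector $c_j$ is literally a column of $N^{(i)}$: the $j$-th one if $j<i$ and the $(j-1)$-th one if $j>i$. The identity $\adj(N^{(i)})\,N^{(i)}=\det(N^{(i)})\,\mathrm{Id}$ then yields $\adj(N^{(i)})\,c_j=\det(N^{(i)})\,e_j$ for $j<i$ and $=\det(N^{(i)})\,e_{j-1}$ for $j>i$, where $e_k$ is the $k$-th standard basis vector. Since $\det(N^{(i)})=(-1)^i\Delta_i$, multiplying by the global factor $(-1)^i$ turns each such column into $\Delta_i e_j$ (resp. $\Delta_i e_{j-1}$). This produces exactly the two diagonal blocks of $\Delta_i$'s in the statement — the upper-left $(i-1)\times(i-1)$ block from the columns $j<i$ and the lower-right $(n-i)\times(n-i)$ block from the columns $j>i$ — and shows that every entry lying outside column $i$ and outside these diagonals vanishes.

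The single remaining column, indexed by $i$, is where the content lies, and I would treat it via the cofactor interpretation $(\adj(A)\,v)_k=\det\big(A\ \text{with its $k$-th column replaced by }v\big)$. Applying this with $A=N^{(i)}$ and $v=c_i$, the matrix obtained by inserting $c_i$ in the $k$-th slot of $N^{(i)}$ has, as its set of columns, all of $c_1,\dots,c_n$ except one column $c_m$ (namely $m=k$ if $k<i$ and $m=k+1$ if $k\geq i$), but with $c_i$ out of its natural order. Restoring the natural order costs a sign equal to the parity of the number of adjacent transpositions needed, which is $i-1-k$ when $k<i$ and $k-i$ when $k\geq i$; hence the determinant equals $(-1)^{i-1-k}\det N^{(k)}$ in the first range and $(-1)^{k-i}\det N^{(k+1)}$ in the second.

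The last step, and the only delicate point, is the sign bookkeeping: substituting $\det N^{(k)}=(-1)^k\Delta_k$ (resp. $\det N^{(k+1)}=(-1)^{k+1}\Delta_{k+1}$) and multiplying by the global $(-1)^i$, I expect all the powers of $-1$ to collapse to a single overall $-1$. Concretely, for $k<i$ the $k$-th entry of column $i$ becomes $(-1)^i(-1)^{i-1-k}(-1)^k\Delta_k=-\Delta_k$, and for $k\geq i$ it becomes $(-1)^i(-1)^{k-i}(-1)^{k+1}\Delta_{k+1}=-\Delta_{k+1}$; as $k$ runs through $1,\dots,i-1$ and then $i,\dots,n-1$ these give precisely the entries $-\Delta_1,\dots,-\Delta_{i-1},-\Delta_{i+1},\dots,-\Delta_n$ of column $i$ in the displayed matrix. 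The main obstacle is thus entirely the careful tracking of these three interacting signs — the transposition parity, the defining sign of $\Delta_j$, and the global $(-1)^i$ — and once the argument is organized by the two ranges of $k$, the verification is routine.
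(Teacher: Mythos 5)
Your proposal is correct: I verified both the block columns ($j\neq i$) and the sign collapse in column $i$ --- $(-1)^i(-1)^{i-1-k}(-1)^k=-1$ for $k<i$ and $(-1)^i(-1)^{k-i}(-1)^{k+1}=-1$ for $k\geq i$ --- and the resulting entries agree with the displayed matrix. Your organization, however, is genuinely different from the paper's. The paper works entry by entry: it expands the $(j,\ell)$-entry of $\adj(N^{(i)})N$ as $\sum_k b_{j,k}a_{k,\ell}$ with $b_{j,k}=(-1)^{j+k}\det N^{(i,j)}_{(k)}$ (or $(-1)^{j+k}\det N^{(i,j+1)}_{(k)}$ when $j\geq i$), then runs through the cases: diagonal positions give $\det N^{(i)}$; off-diagonal positions away from column $i$ vanish because the sum is the Laplace expansion of a determinant with a repeated column (the matrix $N'$ in the paper's proof); and the two halves of column $i$ are recognized as the cofactor expansions of $\Delta_j$, respectively $\Delta_{j+1}$, along the $i$-th column. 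You instead work column by column: for $j\neq i$ the column $c_j$ of $N$ survives as a column of $N^{(i)}$, so the identity $\adj(A)A=\det(A)\,\mathrm{Id}$ disposes of all those columns at once (silently absorbing the paper's repeated-column vanishing argument, which is exactly the standard proof of that identity), while the deleted column is handled by Cramer's rule $(\adj(A)v)_k=\det(A\text{ with $k$-th column replaced by }v)$ together with a count of adjacent transpositions. The trade-off: your packaging has fewer cases and confines all the sign bookkeeping to a single permutation-parity computation, making it shorter and easier to audit; the paper's version invokes nothing beyond Laplace expansion and keeps every sign explicit at the level of the minors $N^{(i,j)}_{(k)}$ introduced in Definition \ref{delta}. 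Both arguments are complete.
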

\begin{proof}
The $(j,\ell)$-entry of $\adj(N^{(i)})N$ is 
$
\sum_k b_{j,k}a_{k,\ell}
$ where $b_{j,k}$ and $a_{k,\ell}$ are the entries of $\adj(N^{(i)})$ and $N$. 

Notice that $b_{j,k}=(-1)^{j+k}\det N^{(i,j)}_{(k)}$ if $j<i$ and $b_{j,k}=(-1)^{j+k}\det N^{(i,j+1)}_{(k)}$ else.

Hence then the $(j,\ell)$-entry of $\adj(N^{(i)})N$ is
$$
\sum_k (-1)^{j+k}\det N^{(i,j)}_{(k)}a_{k,\ell}\ {\rm if}\ j<i\quad {\rm and}\ \sum_k (-1)^{j+k}\det N^{(i,j+1)}_{(k)}a_{k,\ell}\ {\rm if}\ j\geq i .
$$

First let $\ell <i$. If $\ell=j$, then it is equal to $\det N^{(i)}= (-1)^i \Delta_i$. If $\ell \neq j<i$, replace the $j$-th column of $N$ with its $\ell$-th column and call it $N'$. By expanding $\Delta_i(N')$ along the $j$-th column we get
$$
0= \Delta_i(N') = (-1)^i \sum_k (-1)^{j+k}\det N^{(i,j)}_{(k)}a_{k,\ell},
$$
and similarly if $j\geq i$, by relacing the $(j+1)$-th column of $N$ with its $\ell$-th column.

Second, if $\ell>i$, similar arguments as in the case $\ell <i$ show that if $j=\ell -1$, the $(j,j+1)$-entry is equal to $\Delta_i$, and the $(j,\ell)$-entry is equal to $0$ if $j\neq \ell -1$.

Thirdly, if $j<\ell=i$,  the $(j,i)$-entry of $\adj(N^{(i)})N$ is  
$$
\sum_k (-1)^{j+k}\det N^{(i,j)}_{(k)}a_{k,i}.
$$
By expanding $\Delta_j = (-1)^{j}\det N^{(j)}$ along the $i$-th column we get 
$$
\Delta_j = (-1)^{j} \sum_k (-1)^{i-1+k} a_{k,i} \det N^{(i,j)}_{(k)}= (-1)^{i-1} \sum_k (-1)^{j+k} a_{k,i} \det N^{(i,j)}_{(k)}.
$$
Finally, if $ j\geq \ell=i$ then $b_{j,k}= (-1)^{j+k}\det N^{(i,j+1)}_{(k)}$ so $(j,i)$-th entry $\adj(N^{(i)})N$ is  
$$
\sum_k (-1)^{j+k}\det N^{(i,j+1)}_{(k)}a_{k,i}.
$$ 
By expanding $\Delta_{j+1} = (-1)^{j+1}\det N^{(j+1)}$ on the $i$-th column we get
$$
\Delta_{j+1} = (-1)^{j+1} \sum_k (-1)^{i+k} a_{k,i} \det N^{(i,j+1)}_{(k)}= (-1)^{i-1} \sum_k (-1)^{j+k} a_{k,i} \det N^{(i,j+1)}_{(k)}.
$$
\end{proof}
\begin{lem}\label{3lemmas}
For a $1\times n$ row vector $R$ define $L$ as a product of $R$ and column vector $(x_1,\dots, x_n).$
Let $N$ be a $(n-1)\times n$ matrix with row vectors $R_1,\dots R_{n-1}$ and consider  corresponding linear forms $L_1,\dots,L_{n-1}$. Let $M$ obtained by bordering $N$ with a row vector $R_n$ and corresponding linear form $L_n$. Then for $1\leq i,j\leq n$

\begin{itemize}

\item[(1)] $\Delta_i L_n + (-1)^{n+1} \det(M) x_i \in (L_1,\dots , L_{n-1})$
\item[(2)] $(x_i\Delta_j - x_j\Delta_i) \in (L_1,\dots , L_{n-1})$.
\item[(3)] If $j\neq n$, $\Delta_i L_j \in (x_1\Delta_i -x_i\Delta_1 ,\ldots , x_n \Delta_i -x_i\Delta_n) $.
\end{itemize}
\end{lem}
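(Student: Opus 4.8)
The engine for all three parts is Lemma \ref{adj}, which I would exploit by right-multiplying the displayed matrix identity by the column vector $(x_1,\dots ,x_n)^{\mathsf T}$. Since $N(x_1,\dots ,x_n)^{\mathsf T}=(L_1,\dots ,L_{n-1})^{\mathsf T}$, the left-hand side $(-1)^i\adj(N^{(i)})\,N\,(x_1,\dots ,x_n)^{\mathsf T}=(-1)^i\adj(N^{(i)})(L_1,\dots ,L_{n-1})^{\mathsf T}$ has every entry an $S$-linear combination of $L_1,\dots ,L_{n-1}$, hence lies in $(L_1,\dots ,L_{n-1})$. On the right-hand side, reading off the explicit matrix of Lemma \ref{adj} shows that the $j$-th entry of the product equals $\Delta_i x_j-\Delta_j x_i$ when $j<i$ and $\Delta_i x_{j+1}-\Delta_{j+1}x_i$ when $j\geq i$; as $j$ runs over $1,\dots ,n-1$ these exhaust precisely the differences $x_m\Delta_i-x_i\Delta_m$ for $m\neq i$. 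Thus $x_m\Delta_i-x_i\Delta_m\in(L_1,\dots ,L_{n-1})$ for every $m\neq i$, and letting $i$ vary gives part (2) for all index pairs (the case $i=j$ being trivial).

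For part (1), I would expand $\det(M)$ along its last row $R_n=(a_1,\dots ,a_n)$, whose corresponding linear form is $L_n=\sum_k a_k x_k$. Deleting the last row of $M$ returns $N$, so the $(n,k)$-cofactor involves $\det(N^{(k)})=(-1)^k\Delta_k$, giving $\det(M)=(-1)^n\sum_k a_k\Delta_k$. Substituting, $\Delta_i L_n+(-1)^{n+1}\det(M)\,x_i=\sum_k a_k(\Delta_i x_k-\Delta_k x_i)$, and each summand lies in $(L_1,\dots ,L_{n-1})$ by part (2); hence so does the whole sum.

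For part (3), write $L_j=\sum_k c_k x_k$ with $R_j=(c_1,\dots ,c_n)$ and split $\Delta_i x_k=(x_k\Delta_i-x_i\Delta_k)+x_i\Delta_k$ to obtain $\Delta_i L_j=\sum_k c_k(x_k\Delta_i-x_i\Delta_k)+x_i\sum_k c_k\Delta_k$. The first sum visibly lies in $(x_1\Delta_i-x_i\Delta_1,\dots ,x_n\Delta_i-x_i\Delta_n)$. For the second, the same cofactor expansion as in (1) identifies $(-1)^n\sum_k c_k\Delta_k$ with the determinant of the $n\times n$ matrix obtained by appending the row $R_j$ to $N$; since $j\leq n-1$ this matrix has a repeated row and its determinant vanishes, so $\sum_k c_k\Delta_k=0$ and only the first sum survives.

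The one genuinely delicate step is part (2): everything hinges on correctly reading the block structure of the matrix in Lemma \ref{adj} and tracking the signs $(-1)^i$ and $(-1)^k$ so that the entries of the product come out exactly as the differences $x_m\Delta_i-x_i\Delta_m$. Once (2) is in hand, parts (1) and (3) reduce to routine cofactor-expansion bookkeeping, the only subtlety being the vanishing-determinant trick (a repeated row) that kills the extra term in (3).
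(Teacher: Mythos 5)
Your proof is correct, but it is organized quite differently from the paper's, with the logical dependencies reversed. The paper proves (1) first, and without Lemma \ref{adj}: it applies the ordinary adjugate identity $\adj(M)M=\det(M)\,\mathrm{Id}$ to the bordered square matrix $M$, multiplies by the column $(x_1,\dots,x_n)^{\mathsf T}$, and reads off the $i$-th entry; it then obtains (2) as the specialization of (1) to the bordering row $e_j$ (so that $L_n=x_j$ and $\det M=(-1)^{n+j}(-1)^j\Delta_j$); and it proves (3) by a second matrix computation, left-multiplying the identity of Lemma \ref{adj} by $N^{(i)}$ to get $\Delta_i(L_1,\dots,L_{n-1})^{\mathsf T}=N^{(i)}\bigl(\dots,\,x_m\Delta_i-x_i\Delta_m,\,\dots\bigr)^{\mathsf T}$. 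You instead extract (2) directly from Lemma \ref{adj} by right-multiplying by $(x_1,\dots,x_n)^{\mathsf T}$ — your reading of the block structure is right: the $j$-th entry is $\Delta_i x_j-\Delta_j x_i$ for $j<i$ and $\Delta_i x_{j+1}-\Delta_{j+1}x_i$ for $j\geq i$, which does exhaust all $m\neq i$ — and then deduce both (1) and (3) from (2) by Laplace expansion along the appended row; your signs check out, since $\Delta_k=(-1)^k\det N^{(k)}$ gives $\det M=(-1)^n\sum_k a_k\Delta_k$ and hence $\Delta_iL_n+(-1)^{n+1}\det(M)x_i=\sum_k a_k(\Delta_i x_k-\Delta_k x_i)$. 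Your treatment of (3) is a genuine simplification: where the paper runs the adjugate machinery a second time, you split $\Delta_i x_k=(x_k\Delta_i-x_i\Delta_k)+x_i\Delta_k$ and kill the residual term with $\sum_k c_k\Delta_k=0$, i.e.\ the classical fact that $(\Delta_1,\dots,\Delta_n)^{\mathsf T}$ is a syzygy of the rows of $N$ (a repeated-row determinant). On balance, the paper's route proves (1) for an arbitrary bordering row with no appeal to Lemma \ref{adj} at all, while yours invokes Lemma \ref{adj} exactly once, makes (2) the cornerstone, and turns (1) and (3) into short cofactor bookkeeping — both arguments are complete and yield identical statements.
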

\begin{proof}
(1) Consider 
$$
\det (M) \begin{bmatrix}
x_1\\
\vdots\\
x_n
\end{bmatrix}=
\adj(M)M\begin{bmatrix}
x_1\\
\vdots\\
x_n
\end{bmatrix}= \adj(M) \begin{bmatrix}
L_1\\
\vdots\\
L_n
\end{bmatrix}.
$$
The $(i,1)$-entries in the above matrix are $\det(M)x_i $.
On the other, it is equal to 
$$
\sum_{1\leq k\leq n} (-1)^{k+i} \det M^{(i)}_{(k)} L_k= \sum_{1\leq k\leq n-1} (-1)^{k+i} \det M^{(i)}_{(k)} L_k+ (-1)^{n}\Delta_i L_n.
$$
(2) By choosing $L_n= e_j$ in (1) we get
\begin{align*}
\Delta_ix_j + (-1)^{n+1}\det(M)x_i &= \Delta_ix_j + (-1)^{n+1} (-1)^{n+j}(-1)^j\Delta_jx_i \\
&=\Delta_ix_j - x_i\Delta_j\in (L_1,\dots, L_{n-1}).
\end{align*}
(3) 
By Lemma \ref{adj} 
$$
\Delta_i \begin{bmatrix}
L_1\\
\vdots \\
L_{n-1}
\end{bmatrix}=
(-1)^iN^{(i)}\adj(N^{(i)})N\begin{bmatrix}
x_1\\
\vdots \\
x_n
\end{bmatrix}
=N^{(i)}
\begin{bmatrix}
x_1\Delta_i-x_i \Delta_1\\
\vdots \\
x_{i-1}\Delta_i-x_i \Delta_{i-1}\\
x_{i+1}\Delta_i-x_i \Delta_{i+1}\\
\vdots \\
x_{n}\Delta_i-x_i \Delta_{n}
\end{bmatrix} .
$$
\end{proof}

\begin{cor}\label{corollary}
For any $(n-1)\times n$ submatrix of $\Theta$ which corresponds to $L_{i_1},\dots, L_{i_{n-1}}$ define $\Delta_i$ for $1\leq i\leq n$ as in Definition \ref{delta}. If $\Delta_i\neq 0 $ for some $i$, then
\begin{itemize}
\item[(1)] $\Delta_i J\subseteq (L_{i_1}, \dots , L_{i_{n-1}})$ modulo $I_n(\Theta)$,
\item[(2)] $(L_{i_1},\dots, L_{i_{n-1}})B_{(\Delta_i)}= I_2 \begin{bmatrix}
x_1 & \cdots & x_n\\
\Delta_1 & \cdots & \Delta_n
\end{bmatrix} B_{(\Delta_i)}$.
\end{itemize}
\end{cor}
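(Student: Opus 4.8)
The plan is to derive both statements directly from the three identities of Lemma \ref{3lemmas}, the only real work being to choose correctly which matrix plays the role of the $(n-1)\times n$ matrix $N$ there and which extra row is used to border it. Fix the $(n-1)\times n$ submatrix $N$ of $\Theta$ whose rows correspond to $L_{i_1},\dots,L_{i_{n-1}}$, let $\Delta_1,\dots,\Delta_n$ be the associated quantities of Definition \ref{delta}, and assume $\Delta_i\neq 0$ for the fixed index $i$. The key remark is that if $L_j$ is \emph{any} of the remaining rows of $\Theta$ (so $L_j\notin\{L_{i_1},\dots,L_{i_{n-1}}\}$), then bordering $N$ by the row corresponding to $L_j$ produces an $n\times n$ submatrix $M$ of $\Theta$, so that $\det(M)\in I_n(\Theta)$.

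For (1), I would recall that $J=(L_1,\dots,L_N)$, so it suffices to prove $\Delta_i L_j\in (L_{i_1},\dots,L_{i_{n-1}})+I_n(\Theta)S[T_1,\dots,T_r]$ for every row $L_j$ of $\Theta$. This is trivial when $L_j$ is one of the $L_{i_t}$. Otherwise, form $M$ as above and apply Lemma \ref{3lemmas}(1) with $L_j$ in the role of $L_n$:
$$
\Delta_i L_j+(-1)^{n+1}\det(M)\,x_i\in (L_{i_1},\dots,L_{i_{n-1}}).
$$
Since $\det(M)\in I_n(\Theta)$, the middle term lies in $I_n(\Theta)S[T_1,\dots,T_r]$, and the claim follows.

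For (2), the inclusion $\supseteq$ requires no localization: by Lemma \ref{3lemmas}(2) each generating minor $x_k\Delta_\ell-x_\ell\Delta_k$ of the $2\times 2$ minors already lies in $(L_{i_1},\dots,L_{i_{n-1}})$. Conversely, applying Lemma \ref{3lemmas}(3) to each row $L_{i_t}$ of $N$ gives
$$
\Delta_i L_{i_t}\in (x_1\Delta_i-x_i\Delta_1,\dots,x_n\Delta_i-x_i\Delta_n)\subseteq I_2\begin{bmatrix}x_1&\cdots&x_n\\ \Delta_1&\cdots&\Delta_n\end{bmatrix}.
$$
After passing to $B_{(\Delta_i)}$, which is legitimate because $\Delta_i\neq 0$, the factor $\Delta_i$ becomes a unit, so each $L_{i_t}$ lies in the localized ideal of $2\times 2$ minors; this yields $\subseteq$ and hence equality.

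I do not expect a genuine obstacle: the corollary is essentially a repackaging of Lemma \ref{3lemmas}. The only points demanding care are the bookkeeping—correctly identifying the bordered matrices $M$ as honest $n\times n$ submatrices of $\Theta$, so that their determinants are maximal minors lying in $I_n(\Theta)$—and interpreting the statements in the right ring, namely that the congruence in (1) is taken modulo $I_n(\Theta)S[T_1,\dots,T_r]$ (using $x_i\det(M)\in I_n(\Theta)S[T_1,\dots,T_r]$) and that the equality in (2) is of ideals after inverting $\Delta_i$.
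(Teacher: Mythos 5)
Your proposal is correct and takes essentially the same route as the paper: part (1) is exactly the paper's application of Lemma \ref{3lemmas}(1) with $M$ the $n\times n$ submatrix of $\Theta$ obtained by bordering $N$ with the row of any minimal generator $L_j$ (so $\det(M)\in I_n(\Theta)$), and part (2) is the paper's combination of Lemma \ref{3lemmas}(2) for the inclusion $\supseteq$ and Lemma \ref{3lemmas}(3), with $\Delta_i$ made a unit by localization, for $\subseteq$. You merely spell out the bookkeeping that the paper's two-line proof leaves implicit, and you do so accurately.
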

\begin{proof}

Choose for $N$ the submatrix of $\Theta$ such that 
$$
N\begin{bmatrix}
x_1\\
\vdots\\
x_n
\end{bmatrix}= \begin{bmatrix}
L_{i_1}\\
\vdots\\
L_{i_{n-1}}
\end{bmatrix} .
$$

(1)  follows from Lemma \ref{3lemmas} (1), by choosing 
for $M$ the matrix corresponding to add any minimal generator $L_j$ of $J$.

(2) follows from Lemma \ref{3lemmas} (2) and (3).
\end{proof}
\begin{thm}\label{main-sat}
Let $S=k[x_1,\dots ,x_n]$ be a polynomial ring over a field $k$, $I=(f_1,\dots,f_r)$ be a linearly presented $\mm$-primary ideal of $S$ and $\Phi : \mathbb{P}^{n-1}\dashrightarrow \mathbb{P}^{r-1}$ be the map defined by the forms $f_1,\dots f_r$. Let $\Theta$ be the Jacobian dual matrix of a presentation matrix of $I$,  then 
$$
I_n(\Theta)^{sat}= I(W).
$$
\end{thm}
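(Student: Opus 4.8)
The plan is to show that $I_n(\Theta)$ and $I(W)$ define the same closed subscheme of $\mathbb{P}^{r-1}$; since $\mathcal{F}(I)$ is a domain (it is a subring of $S[t]$), the ideal $I(W)=\ker(B\to\mathcal{F}(I))$ is prime, hence saturated, so this is exactly the assertion $I_n(\Theta)^{sat}=I(W)$. Concretely, I will check that the two homogeneous ideals agree after localizing at every relevant prime $\mathfrak{p}\in\proj(B)$; equality of all these localizations means the associated ideal sheaves coincide, whence $I_n(\Theta)^{sat}=I(W)^{sat}=I(W)$. One inclusion is free: by Lemma \ref{radical}, $I_n(\Theta)\subseteq\sqrt{I_n(\Theta)}=I(W)$, and saturating against the saturated ideal $I(W)$ gives $I_n(\Theta)^{sat}\subseteq I(W)$. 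At a prime $\mathfrak{p}\notin W$ both ideals localize to the unit ideal, again by Lemma \ref{radical}. So everything reduces to proving $I_n(\Theta)_\mathfrak{p}=I(W)_\mathfrak{p}$ at the primes $\mathfrak{p}\in W$.

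Fix such a $\mathfrak{p}$. By Lemma \ref{rank}, $\Theta_\mathfrak{p}$ has rank $n-1$, so there is an $(n-1)\times n$ submatrix $N$ of $\Theta$, corresponding to rows $L_{i_1},\dots,L_{i_{n-1}}$, one of whose maximal minors $\Delta_i$ does not lie in $\mathfrak{p}$. I will work on the chart $D_+(\Delta_i)\subseteq\mathbb{P}^{r-1}$, that is, over $B_{(\Delta_i)}$, which contains $\mathfrak{p}$. By Corollary \ref{corollary}(2), over $B_{(\Delta_i)}$ the ideal $(L_{i_1},\dots,L_{i_{n-1}})$ equals $I_2\left[\begin{smallmatrix} x_1 & \cdots & x_n \\ \Delta_1 & \cdots & \Delta_n\end{smallmatrix}\right]$; since $\Delta_i$ is now a unit, this cuts out inside $\mathbb{P}^{n-1}\times D_+(\Delta_i)$ the graph of the section $s\colon t\mapsto(\Delta_1(t):\cdots:\Delta_n(t))$, a closed subscheme $Z$ mapping isomorphically to $D_+(\Delta_i)$ via $\pi_2$. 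As $\Gamma$ is scheme-theoretically defined by $J\supseteq(L_{i_1},\dots,L_{i_{n-1}})$ (here I use that $I$ is $\mm$-primary, as in Lemma \ref{radical}), it is a closed subscheme of $Z$; hence $\pi_2$ restricts to a closed immersion $\Gamma\cap\pi_2^{-1}(D_+(\Delta_i))\hookrightarrow D_+(\Delta_i)$, and the scheme-theoretic image $W$ is, on this chart, cut out precisely by the pullback $s^*J$.

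It then remains to compute $s^*J$ and match it with $I_n(\Theta)$. Substituting $x_j=\Delta_j$ sends $L_k=\sum_j\Theta_{kj}x_j$ to $(\Theta\Delta)_k$, where $\Delta=(\Delta_1,\dots,\Delta_n)^t$. For $k\in\{i_1,\dots,i_{n-1}\}$ this vanishes, since bordering $N$ by one of its own rows produces a repeated row, i.e. $N\Delta=0$; for each remaining row $k$, a cofactor expansion — which is exactly the content of Lemma \ref{3lemmas}(1) — identifies $(\Theta\Delta)_k$ up to the sign $(-1)^n$ with the maximal minor $\det M_k$ of $\Theta$ obtained by bordering $N$ with row $k$. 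Thus $s^*J=(\det M_k:\ k\notin\{i_1,\dots,i_{n-1}\})\,B_{(\Delta_i)}$. Finally, since $\Delta_i$ is invertible, elementary row operations using the invertible $(n-1)\times(n-1)$ block of $N$ clear all entries outside the $i$-th column in every other row, which shows that these bordered minors generate all of $I_n(\Theta)$ over $B_{(\Delta_i)}$. Therefore $I(W)_{(\Delta_i)}=s^*J=I_n(\Theta)_{(\Delta_i)}$, and in particular $I(W)_\mathfrak{p}=I_n(\Theta)_\mathfrak{p}$, as required.

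I expect the main obstacle to be the second paragraph: one must justify that over the chart the graph genuinely embeds as a closed subscheme of the section $Z$, and that the scheme-theoretic image of $\pi_2|_\Gamma$ is therefore exactly $V(s^*J)$ rather than merely its underlying set (which is all that Lemma \ref{radical} provides). This is where Corollary \ref{corollary}(2) and the hypothesis that $\Gamma$ is scheme-defined by $J$ are essential, and one must check that passing to the degree-zero piece $B_{(\Delta_i)}$ and restricting along the section are compatible with forming the scheme-theoretic image. The determinantal localization statement invoked at the end is standard, but should be phrased carefully so that it applies to arbitrary maximal minors of $\Theta$ and not only to those bordering the fixed submatrix $N$.
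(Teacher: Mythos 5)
Your proof is correct, but its endgame is genuinely different from the paper's. Both arguments share the same local setup: reduce via Lemma \ref{radical} to comparing stalks at primes $\p$ lying on $W$, use Lemma \ref{rank} to produce a submatrix $N$ with some $\Delta_i\notin\p$ (strictly speaking Lemma \ref{rank} is stated at points of $W$; for a non-closed $\p$ one specializes to a closed point $p\in V(\p)$ and notes $\Delta_i\notin\mathfrak{m}_p\supseteq\p$ --- the paper glosses this too), and use Corollary \ref{corollary}(2) to recognize that over $D_+(\Delta_i)$ the forms $L_{i_1},\dots,L_{i_{n-1}}$ cut out the graph of the section $(\Delta_1:\cdots:\Delta_n)$. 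From there the paper proves that $\bar{B}_{\p}=(B/I_n(\Theta))_{\p}$ is a \emph{domain}: using Corollary \ref{corollary}(1) it computes $\text{Sym}(I)\otimes_B\bar{B}_{\p}=\bar{B}_{\p}[x_i]$ and identifies $\proj(\bar{B}_{\p}[x_i])\cong\spec(\bar{B}_{\p})$ with a stalk of the isomorphism $\pi:\Gamma\to W$, whose reducedness and irreducibility it imports from the regularity-of-fibers results quoted in the Remark (\cite{EH}, \cite{CH}, resting on the theorem of \cite{EHU} that $\reg I^t=dt$ for $t\gg 0$); combined with $\sqrt{I_n(\Theta)}=I(W)$, primality of the localization forces $I_n(\Theta)_{\p}=I(W)_{\p}$. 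You instead bypass the isomorphism $\Gamma\cong W$ and Corollary \ref{corollary}(1) entirely: since $\Gamma$ sits scheme-theoretically inside the section graph, $\pi_2$ is a closed immersion over the chart, so $W$ is cut out there by $s^*J$; your cofactor computation (the content of Lemma \ref{3lemmas}(1), with the correct sign $\sum_j\Theta_{kj}\Delta_j=(-1)^n\det M_k$, and $N\Delta=0$ for the rows of $N$) identifies $s^*J$ with the minors bordering $N$, and the standard Schur-complement localization of determinantal ideals shows these generate $I_n(\Theta)B_{\Delta_i}$, giving both inclusions by explicit generator matching. The trade-off: the paper's route is shorter given the cited machinery and explains why the stalks of $\pi_2$ are single reduced points, while yours is more self-contained --- it needs no input from \cite{EH} or \cite{CH} and in fact reproves locally that $\pi_2$ maps $\Gamma$ isomorphically onto $W$. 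The routine verifications you flag (formation of the scheme-theoretic image commutes with restriction to opens of the target for quasi-compact morphisms; the determinantal localization applies to \emph{all} maximal minors of $\Theta$ after row operations by the invertible $(n-1)\times(n-1)$ block, which leave $I_n(\Theta)$ unchanged; the degree-$(n-1)$ grading of $\Delta_i$ only affects bookkeeping in $B_{(\Delta_i)}$, not the ideal-sheaf statement) are indeed standard and pose no obstacle.
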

\begin{proof}
By Lemma \ref{radical}, $\sqrt{I_n(\Theta)}= I(W)$. Let $\p \in \spec(B)$ containing $I_n(\Theta)$. We need to show that $(B/I_n(M))_{\p} \cong (B/I(W))_{\p}$. For this to hold, it suffices to prove that $\bar{B}_{\p}=(B/I_n(M))_{\p}$ is a domain. By Lemma \ref{rank}, there exist $L_{i_1},\dots L_{i_{n-1}}$ and $i$ with $\Delta_i \notin \p$, with notations as in Corollary \ref{corollary}. Now
\begin{align*}
\bar{B}_{\p}[x_1,\dots, x_n]/J\otimes_{B}\bar{B}_{\p}&= \bar{B}_{\p}[x_1,\dots, x_n]/(L_{i_1},\dots L_{i_{n-1}}) \otimes_{B}\bar{B}_{\p}\qquad \mbox{by Corollary \ref{corollary} (1)}\\
&= \bar{B}_{\p}[x_i] \qquad  \qquad \qquad \qquad \qquad \quad \qquad \qquad  \mbox{by Corollary \ref{corollary} (2).}
\end{align*}

As $J$ scheme defines $\Gamma$, $\proj ( \bar{B}_{\p}[x_1,\dots, x_n]/J\otimes_{B}\bar{B}_{\p})$ is the stalk of  the isomorphism \linebreak$\pi :\Gamma \rightarrow W$ over $V(\p )$; in particular, it is reduced and irreducible. This shows that $\proj(\bar{B}_p[x_i])\cong \spec(\bar{B}_{\p})$ is reduced and irreducible. Hence $\bar{B}_{\p}$ is a domain.
\end{proof}

We finish this section with two conjectures.
\begin{conj}\label{conj-truncation}
Let $S=k[x_1,\dots ,x_n]$ be a polynomial ring and $I=(f_1,\dots,f_r)$ be a linearly presented $\mm$-primary ideal of $S$. Suppose that $\Phi : \mathbb{P}^{n-1}\dashrightarrow \mathbb{P}^{r-1}$ is a rational map defined by forms $f_1,\dots f_r$. Let $\Theta$ be the Jacobian dual matrix of a presentation matrix of $I$, then 
$$
I_n(\Theta)= I(W)_{\geq n}.
$$
\end{conj}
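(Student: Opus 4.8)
\emph{Setup and reduction.} Let $\mathfrak n=(T_1,\dots,T_r)$ denote the irrelevant ideal of $B$. By Theorem \ref{main-sat} we have $I_n(\Theta)^{sat}=I(W)$, and $I_n(\Theta)$ is generated in degree $n$ since its generators are the $n\times n$ minors of a matrix of linear forms. One inclusion is then immediate: every element of $I_n(\Theta)$ lies in $I(W)$ and has degree at least $n$, so $I_n(\Theta)\subseteq I(W)_{\ge n}$. For the reverse inclusion it suffices to establish the degreewise equality $(I_n(\Theta))_e=I(W)_e$ for all $e\ge n$; as $I_n(\Theta)\subseteq I(W)$ already gives ``$\subseteq$'' in each degree, and as
$$
I(W)/I_n(\Theta)=I_n(\Theta)^{sat}/I_n(\Theta)=H^0_{\mathfrak n}\bigl(B/I_n(\Theta)\bigr),
$$
the conjecture is \emph{equivalent} to the vanishing $H^0_{\mathfrak n}(B/I_n(\Theta))_e=0$ for $e\ge n$, i.e. to the bound $\en H^0_{\mathfrak n}(B/I_n(\Theta))\le n-1$ on the saturation defect.

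\emph{Main line of attack.} I would prove this bound through the determinantal structure of $\Theta$, an $N\times n$ matrix of linear forms. Were $I_n(\Theta)$ of the expected codimension $N-n+1$, generic perfection would make its Eagon--Northcott complex acyclic; since that complex is a linear resolution of an ideal of maximal minors generated in degree $n$, one would get $\reg(B/I_n(\Theta))=n-1$ and hence $\en H^0_{\mathfrak n}(B/I_n(\Theta))\le n-1$, which is exactly what is needed. The trouble is that $\Theta$ is far from generic: by Lemma \ref{radical} the codimension of $I_n(\Theta)$ equals $\operatorname{ht}I(W)=r-n$ (recall $\dim W=n-1$), which need not coincide with $N-n+1$, so the Eagon--Northcott complex may fail to resolve. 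I would therefore argue locally. By Lemma \ref{rank} the rank of $\Theta$ is exactly $n-1$ at every point of $W'$, so as $\Delta$ runs over the $(n-1)\times(n-1)$ minors (forms of degree $n-1$) the charts $D_+(\Delta)$ cover $W'$, and on each of them Corollary \ref{corollary} gives $\bar B_{(\Delta)}\cong (B/I(W))_{(\Delta)}$ exactly as in the proof of Theorem \ref{main-sat}. Consequently, for any $g\in I(W)_e$ and any such $\Delta$ one has $\Delta^{k}g\in I_n(\Theta)$ for some $k$, and everything reduces to descending this membership to $k=0$ under the hypothesis $e\ge n$.

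\emph{The main obstacle.} The crux is precisely this descent, i.e. showing that for an $(n-1)$-minor $\Delta$ one has $(I_n(\Theta):\Delta)_e=(I_n(\Theta))_e$ for $e\ge n$, so that $\Delta$ behaves like a nonzerodivisor modulo $I_n(\Theta)$ above degree $n-1$. I would try to make this effective using the bilinear relations of Lemma \ref{3lemmas}: the congruences $\Delta_iL_n\equiv\pm(\det M)\,x_i$ modulo $(L_1,\dots,L_{n-1})$ convert a product of an $(n-1)$-minor with a linear syzygy into a genuine $n\times n$ minor, and I would aim to iterate them so as to rewrite any degree-$n$ element of $I(W)$ directly as a combination of maximal minors, thereby bounding $\en H^0_{\mathfrak n}(B/I_n(\Theta))$ by $n-1$ from below. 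This is the step I expect to be hardest, because it is exactly here that the non-genericity of $\Theta$ blocks the formal Cohen--Macaulay/linear-resolution route; controlling the saturation defect in the narrow band of degrees $<n$ without that hypothesis is the real content of the conjecture.

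\emph{A conditional shortcut.} Finally, I note that a proof of the (stronger) linearity of the resolution of $I_n(\Theta)$ would settle the conjecture outright: linearity forces $\reg(B/I_n(\Theta))=n-1$, hence the vanishing above. I would keep this in reserve but not rely on it, since linearity is the deeper statement tied to \cite{EHU} and is unlikely to be more tractable than the degree bound itself.
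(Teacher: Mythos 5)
The statement you were asked to prove is Conjecture \ref{conj-truncation}: the paper states it as an open conjecture and offers no proof, so there is no argument of the authors' to compare against --- and, to your credit, your proposal does not claim to close it either. Your reductions are all correct: $I_n(\Theta)\subseteq I(W)_{\geq n}$ follows from Theorem \ref{main-sat} together with generation in degree $n$; the identification $I(W)/I_n(\Theta)=H^0_{\mathfrak n}(B/I_n(\Theta))$ correctly recasts the conjecture as the bound $\en H^0_{\mathfrak n}(B/I_n(\Theta))\leq n-1$ (one slip: you want to bound this \emph{from above}, not ``from below''). Your diagnosis of why the formal route fails is also sound: $I_n(\Theta)$ has codimension $r-n$ rather than the generic value $N-n+1$, so the Eagon--Northcott complex need not be acyclic and the Cohen--Macaulay/linear-resolution shortcut is unavailable; and the covering of $W'$ by the charts $D_+(\Delta)$, via Lemma \ref{rank} and Corollary \ref{corollary}, is a legitimate reading of the proof of Theorem \ref{main-sat}.

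The genuine gap is the descent step, and it is not a technical loose end but the conjecture itself in disguise. The colon claim $(I_n(\Theta):\Delta)_e=(I_n(\Theta))_e$ for $e\geq n$ is essentially equivalent to what is to be proved: granting the conjecture, $B/I_n(\Theta)$ agrees in degrees $\geq n$ with the domain $B/I(W)$, on which $\Delta$ is a nonzerodivisor, and conversely your descent derives the conjecture from the colon claim. The relations of Lemma \ref{3lemmas} give only single-step rewritings such as $\Delta_iL_n+(-1)^{n+1}\det(M)x_i\in(L_1,\dots,L_{n-1})$, and iterating them yields memberships in $I_n(\Theta)$ only after multiplication by powers of $\Delta$ --- which is again just saturation; nothing in your sketch removes the last factor of $\Delta$, precisely because the non-genericity of $\Theta$ leaves no acyclic complex to control the saturation defect in degrees below $n$. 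What your proposal does establish completely is the implication Conjecture \ref{conj-linear resolution} $\Rightarrow$ Conjecture \ref{conj-truncation}: linearity forces $\reg(B/I_n(\Theta))=n-1$, hence $\en H^0_{\mathfrak n}(B/I_n(\Theta))\leq n-1$, hence the truncation equality. That argument is correct and consistent with the spirit of Theorem \ref{Thm- conjectures}, but it is conditional on the other, stronger open conjecture of the paper, so the statement itself remains unproved.
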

\begin{conj}\label{conj-linear resolution}
Let $S=k[x_1,\dots ,x_n]$ be a polynomial ring and $I=(f_1,\dots,f_r)$ be a linearly presented $\mm$-primary ideal of $S$.  Let $\Theta$ be the Jacobian dual matrix of a presentation matrix of $I$ then $I_n(\Theta)$ has a linear resolution.
\end{conj}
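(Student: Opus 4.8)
The plan is to realize $I_n(\Theta)$ as the degeneracy ideal of the $N\times n$ matrix of linear forms $\Theta$, where $N$ is the number of minimal (linear) syzygies of $I$, and then to control its syzygies. In the nontrivial case $r>n$ the matrix $\Theta$ has generic rank $n$ and drops to rank $n-1$ exactly along $W$ (Lemma \ref{rank}), so $V(I_n(\Theta))=W'$ is the rank $\le n-1$ locus and $I_n(\Theta)$ is generated in the single degree $n$. The first thing I would try is to feed $\Theta$ into the Eagon--Northcott complex, which resolves the ideal of maximal minors and is a \emph{linear} resolution when the entries are linear forms. This would settle the conjecture at once, but it succeeds only when $I_n(\Theta)$ has the generic codimension $N-n+1$ expected of a maximal-minor ideal.

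The obstruction is that this generic codimension is essentially never attained here. By Lemma \ref{radical} and Theorem \ref{main-sat} we have $\sqrt{I_n(\Theta)}=I(W)$ with $W\cong\Gamma\cong\mathbb{P}^{n-1}$, so $\operatorname{codim}I_n(\Theta)=r-n$, which is in general far below $N-n+1$. For instance, for $I=\mm^2$ in three variables one computes $r=6$ and $N=\beta_1(I)=8$, so $I_n(\Theta)$ has codimension $3$ while the generic value is $N-n+1=6$; the Eagon--Northcott complex is then not acyclic and gives nothing. This is the structural reason the statement is delicate: no off-the-shelf determinantal resolution applies, and the higher syzygies of $I_n(\Theta)$ must be governed by the geometry of the embedded $\mathbb{P}^{n-1}$ rather than by genericity.

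The route I would therefore pursue is to reduce the linear-resolution statement to a Castelnuovo--Mumford regularity bound on the image. Granting Conjecture \ref{conj-truncation}, which identifies $I_n(\Theta)$ with the truncation $I(W)_{\ge n}$, it suffices to prove $\reg I(W)\le n$, since for any graded ideal $\mathfrak a$ the truncation $\mathfrak a_{\ge m}$ has a linear resolution as soon as $m\ge\reg\mathfrak a$. I would try to bound $\reg I(W)=\reg\mathcal F(I)$ through the projection $\pi_2:\Gamma\to W$, whose fibers and stalks have regularity $0$ by the Remark, together with the comparison between the regularity of the special fiber ring and the asymptotic value $\reg I^{t}=dt$ of the powers of $I$. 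A concrete implementation is to read off $\reg I(W)$ from the local cohomology of $\mathcal R(I)$ (or of the symmetric algebra, which scheme-defines $\Gamma$) along the two projections, using $\Gamma\cong\mathbb{P}^{n-1}$ to make $\pi_{1*}$ transparent.

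The hard part will be exactly this regularity estimate. Proving $\reg I(W)\le n$ is essentially equivalent to the Eisenbud--Huneke--Ulrich bound $\reg I^{n-1}=(n-1)d$, so an unconditional proof of the linear resolution would in particular settle their conjecture; conversely, the non-generic codimension exhibited above shows one cannot hope to bypass this by a purely determinantal argument. I expect the decisive technical step to be controlling the $k$-th syzygies of $I_n(\Theta)$ in degree $n+k$. The relations in Lemma \ref{3lemmas} and Corollary \ref{corollary}---the identities $x_i\Delta_j-x_j\Delta_i\in(L_{i_1},\dots,L_{i_{n-1}})$ modulo $I_n(\Theta)$ and $\Delta_iJ\subseteq(L_{i_1},\dots,L_{i_{n-1}})$ modulo $I_n(\Theta)$---already produce the first linear syzygies and a patching of $W'$ by the open sets $\{\Delta_i\neq 0\}$, but propagating this local description into a genuinely linear \emph{global} resolution is where the real difficulty lies.
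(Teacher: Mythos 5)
You should first be aware that the statement you were asked to prove is one of the paper's two \emph{open conjectures}, not a theorem: the paper offers no proof of it, and proves only (Theorem \ref{Thm- conjectures}) that it implies Conjecture \ref{main conjecture}, the two being equivalent if Conjecture \ref{conj-truncation} holds. Judged on its own terms, your proposal is an accurate strategic analysis rather than a proof, and you are candid about this. Your diagnosis of the determinantal obstruction is correct: since $W\cong\mathbb{P}^{n-1}$, one has $\operatorname{codim} I_n(\Theta)=r-n$, which falls short of the generic value $N-n+1$ (your example $I=\mm^2$ in three variables, with $r=6$, $N=8$, codimension $3$ against expected $6$, checks out), so the Eagon--Northcott complex is not acyclic and no off-the-shelf linear determinantal resolution applies. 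Likewise your reduction --- granting Conjecture \ref{conj-truncation}, linearity of the resolution of $I_n(\Theta)=I(W)_{\geq n}$ is equivalent to $\reg(I(W))\leq n$, which by Proposition \ref{upper bound regularity}(2) is equivalent to $\Stab(I)\leq n-1$ --- is a correct re-derivation of the paper's Theorem \ref{Thm- conjectures}(2).

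The gap is that this route is circular as a proof strategy: the regularity bound $\reg(I(W))\leq n$ that you propose to establish \emph{is} the Eisenbud--Huneke--Ulrich conjecture (Conjecture \ref{main conjecture}), the very statement that Conjecture \ref{conj-linear resolution} was formulated to attack, and you supply no independent route to it. The mechanism you sketch --- that the fibers and stalks of $\pi_2:\Gamma\rightarrow W$ have regularity zero, together with $\Gamma\cong\mathbb{P}^{n-1}$ --- yields that $\pi_2$ is an isomorphism onto $W$, but it does not bound $\reg(I(W))$: by Proposition \ref{upper bound regularity}(2) that regularity is governed by $\Stab(I)$, which is exactly the unknown quantity, and regularity of fibers of a projection does not control the regularity of the image without further input. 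You also lean on Conjecture \ref{conj-truncation}, itself open; unconditionally one only knows $I_n(\Theta)^{sat}=I(W)$ (Theorem \ref{main-sat}), and saturation alone does not transfer a bound $\reg(I(W))\leq n$ into linearity of the resolution of $I_n(\Theta)$ unless the two ideals agree in every degree $\geq n$ --- which is precisely what the truncation conjecture asserts. Your closing sentence, that the real difficulty is controlling the $k$-th syzygies of $I_n(\Theta)$ in degree $n+k$ beyond the first linear syzygies produced by Lemma \ref{3lemmas} and Corollary \ref{corollary}, is a fair restatement of the open problem, but it is a restatement, not a step toward its solution.
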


\section{Asymptotic behavior of regularity }
In this section, we study the conjecture of Eisenbud, Huneke, and Ulrich \cite[Conjecture 1.1]{EHU} on the asymptotic behavior of regularity of linearly presented $\mm$-primary ideals. We will show the relation between this conjecture and the ideal of maximal minors of Jacobian dual matrices. We start this section by stating two equivalent definition of Castelnuovo-Mumford regularity. Before that, we need to mention the definition of graded Betti numbers. For a finitely generated graded $S$-module $M$, a minimal graded free resolution of $M$ is an exact sequence
$$
0\rightarrow F_p \rightarrow F_{p-1} \rightarrow \cdots \rightarrow F_0 \rightarrow M \rightarrow0,
$$
where every $F_i$ is a graded free $S$-module of the form
$F_i =\oplus_{j\in \mathbb{Z}}S(-j)^{\beta_{i,j}(M)}$
with the minimal number of basis elements, and every map is graded. The value $\beta_{i,j(M)}$ is called the $i$th graded Betti number of $M$ of degree $j$. 
\begin{thm}
Let $M$ be a finitely generated $S$-module,  the regularity of $M$ is given by:
$$
\reg(M)= \max_i \lbrace \en(H^i_{\mm}(M))+i \rbrace.
$$
Although the regularity is a measure of vanishing of local cohomologies, it is also a measure of vanishing of graded Betti numbers. Eisenbud and Goto in \cite{EG} proved that
$$
\reg(M) = \max_i \lbrace  j-i | \beta_{i,j}(M)\neq 0\rbrace.
$$
\end{thm}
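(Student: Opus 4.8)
The plan is to treat the local-cohomology expression as the definition of regularity and to derive the Betti-number formula of Eisenbud and Goto from it through graded local duality. Over $S=k[x_1,\dots,x_n]$ with graded maximal ideal $\mm$ and canonical module $S(-n)$, graded local duality supplies functorial isomorphisms of graded $k$-vector spaces
$$
H^i_\mm(M)_j\cong \operatorname{Hom}_k\big(\operatorname{Ext}^{n-i}_S(M,S(-n))_{-j},\,k\big),
$$
so that $\en(H^i_\mm(M))=-\operatorname{indeg}\operatorname{Ext}^{n-i}_S(M,S(-n))$, where $\operatorname{indeg}$ denotes the least degree in which a graded module is nonzero. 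Thus each term $\en(H^i_\mm(M))+i$ is controlled by the initial degree of the single module $\operatorname{Ext}^{n-i}_S(M,S(-n))$, and the local-cohomology formula is recast as a statement about the initial degrees of these $\operatorname{Ext}$ modules.

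Next I would compute the $\operatorname{Ext}$ modules from a minimal graded free resolution $F_\bullet\to M$ with $F_p=\bigoplus_j S(-j)^{\beta_{p,j}(M)}$. Applying $\operatorname{Hom}_S(-,S(-n))$ produces a cochain complex $G^\bullet$ whose term in homological degree $p$ is $G^p=\bigoplus_j S(j-n)^{\beta_{p,j}(M)}$, with minimal generators sitting in degrees $n-j$, and $\operatorname{Ext}^{n-i}_S(M,S(-n))=H^{n-i}(G^\bullet)$. A direct degree bookkeeping then identifies, for $p=n-i$, the quantity $\en(H^i_\mm(M))+i$ with $\max\{\,j-p:\beta_{p,j}(M)\neq 0\,\}$, provided the extremal generator in degree $n-j$ is not erased by a coboundary; taking the maximum over $i$ yields exactly $\max\{\,j-i:\beta_{i,j}(M)\neq 0\,\}$, which is the asserted formula.

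The main obstacle is precisely this survival statement: one must show that the largest value of $j-p$ with $\beta_{p,j}(M)\neq 0$ genuinely appears in some $\operatorname{Ext}$ module rather than being killed in the cohomology of $G^\bullet$. Minimality of the resolution is essential here, since the dualized differentials have entries in $\mm$ and hence cannot send a minimal generator of extremal degree isomorphically onto another; this rules out cancellation at the corner. The cleanest way to make this rigorous, and the route I would follow, is the standard induction on $\dim M$ using a generic linear form $\ell$: one reduces along the exact sequences relating $M$, $0:_M\ell$, and $M/\ell M$ (a Koszul-type reduction), verifies that the notion of ``$m$-regularity'' in both the cohomological and the resolution formulation is preserved under this reduction, and disposes of the finite-length base case by hand, where $H^i_\mm(M)=0$ for $i>0$, $H^0_\mm(M)=M$, and the resolution degrees are pinned down directly. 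Reassembling the inductive step forces the two maxima to agree.
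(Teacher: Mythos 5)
This statement is background in the paper: it is quoted as a classical result with a citation to Eisenbud--Goto \cite{EG}, and the paper supplies no proof of it, so there is nothing internal to compare you against; your sketch has to stand on its own. On its own it is the standard textbook argument (as in Eisenbud's \emph{Geometry of Syzygies}, or Bruns--Herzog), and its skeleton is sound: graded local duality converts $\en (H^i_{\mm}(M))+i$ into the initial degree of $\operatorname{Ext}^{n-i}_S(M,S(-n))$, and since each $\operatorname{Ext}$ is a subquotient of the complex $G^\bullet$ obtained by dualizing a minimal free resolution, the inequality $\en (H^i_{\mm}(M))+i\leq \max\{j-p:\beta_{p,j}(M)\neq 0\}$ is indeed pure degree bookkeeping.

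The one place where your write-up is genuinely too loose is the survival statement, and the slogan ``minimality rules out cancellation at the corner'' is not by itself an argument: away from a correctly chosen corner, coboundaries \emph{do} kill classes (this is exactly why not every $\beta_{p,j}$ with $j-p$ large is visible in local cohomology --- only the extremal Betti numbers in the sense of Bayer--Charalambous--Popescu survive). To close it, set $r=\max\{j-p:\beta_{p,j}\neq 0\}$ and choose, among the pairs attaining $r$, the one with $p$ \emph{maximal}. Then in internal degree $n-j$ one checks: $(G^p)_{n-j}=k^{\beta_{p,j}}$ since any other contribution would need $\beta_{p,j'}\neq 0$ with $j'>j$; the incoming image lies in $(\mm G^p)_{n-j}=0$ for the same reason; and the outgoing differential lands in $(\mm G^{p+1})_{n-j}$, which can only involve generators dual to $\beta_{p+1,j+1}$ --- zero precisely because $p$ was taken maximal among extremal pairs. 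With this choice the duality--corner argument alone proves the hard inequality, and the induction on $\dim M$ via a generic linear form $\ell$ becomes unnecessary; if you do prefer that route, you must additionally allow a base field extension when $k$ is finite (genericity of $\ell$), prove the comparison of Betti numbers over $S$ and $S/(\ell)$, and handle the finite-length base case via Matlis duality, where $\operatorname{Ext}^n_S(M,S(-n))\cong \operatorname{Hom}_k(M,k)$ identifies $\max\{j:\beta_{n,j}\neq 0\}$ with $n+\en(M)$. Committing to one of the two arguments, with the corner selection made explicit, would turn your sketch into a complete proof.
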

The most significant simple result on the regularity of powers of graded ideals is the following one, due independently to  Kodiyalam \cite{Kodiyalam} and to Cutkosky, Herzog and Trung \cite{CHT}. 
\begin{thm}\label{linearity powers}
Let $I$ be an ideal of $S =k[x_1,\dots,x_n]$.  There exists $t_0$ and $b$ such that 
$$
\reg(I^t)= td+b, \quad \forall t\geq t_0
$$
with 
$$
d:= \min \lbrace \mu \,\, \vert \,\, \exists t\geq 1, \,\, (I_{\leq \mu}) I^{t-1}=I^t\rbrace.
$$ 
\end{thm}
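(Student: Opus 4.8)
The plan is to package all the powers $I^t$ into the Rees algebra, to read $\reg(I^t)$ off the internal degrees of the graded pieces of its local cohomology, and then to invoke an eventual--linearity principle for finitely generated bigraded modules. View $\mathcal{R}:=\mathcal{R}(I)=\bigoplus_{t\ge 0}I^t$ as a bigraded algebra, graded both by the internal $x$--degree and by the power $t$; in the $t$--grading it is a \emph{standard} graded $S$--algebra, since it is generated over $S=\mathcal{R}_0$ by $\mathcal{R}_1=I$. Because local cohomology with support in $\mm=(x_1,\dots,x_n)$ commutes with the direct sum over $t$, one has $H^i_{\mm}(\mathcal{R})_t=H^i_{\mm}(I^t)$ as graded $S$--modules for every $i$ and $t$. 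Hence, by the local cohomology description of regularity,
\[
\reg(I^t)=\max_i\bigl\{\en(H^i_{\mm}(\mathcal{R})_t)+i\bigr\},
\]
and it suffices to show that, for each fixed $i$, the map $t\mapsto \en(H^i_{\mm}(\mathcal{R})_t)$ is eventually linear, and then to take the pointwise maximum of these finitely many functions, which is again eventually linear.

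The role of the integer $d$ is to sharpen the slope. The defining condition is stable in $t$: if $(I_{\le \mu})I^{t-1}=I^t$ holds for one $t$, then $I^{t+1}=I\cdot I^t=(I_{\le\mu})\,I\cdot I^{t-1}=(I_{\le\mu})I^t$, so it holds for all larger $t$. Writing $J:=I_{\le d}$, we obtain $I^t=J^{\,t-t_0}I^{t_0}$ for all $t\ge t_0$, where $J$ is generated in internal degrees at most $d$, the value $d$ being attained. Consequently the tail $\bigoplus_{s\ge 0}J^{s}I^{t_0}=\bigoplus_{t\ge t_0}I^t$ is a \emph{finitely generated} graded module over the standard graded $S$--algebra $\mathcal{R}(J)$, all of whose $S$--algebra generators have internal degree at most $d$.

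The engine is then the following principle, which I would establish as the technical core: if $A=\bigoplus_{s\ge 0}A_s$ is a standard graded $S$--algebra with $A_0=S$ and algebra generators of internal degree at most $\delta$, and $L=\bigoplus_{s}L_s$ is a finitely generated graded $A$--module, then each quantity $\en(H^i_{\mm}(L)_s)$ is eventually a linear function of $s$ whose slope is at most $\delta$. To prove it I would present $A$ as a bihomogeneous quotient of a bigraded polynomial ring $P=S[T_1,\dots,T_m]$, compute $H^i_{\mm}(L)$ via the \v{C}ech complex on $x_1,\dots,x_n$, and control the internal degrees of its graded pieces. Since each $H^i_{\mm}(L_s)$ is Artinian over $S$, a graded--duality argument produces a finitely generated bigraded module whose $s$--th component has least internal degree eventually linear in $s$ (this reduces, by Noetherian induction, to tracking the finitely many bidegrees of a generating set). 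Applying this with $A=\mathcal{R}(J)$, $\delta=d$ and $L=\bigoplus_{t\ge t_0}I^t$ yields, for each $i$, an eventually linear function $\en(H^i_{\mm}(I^t))+i$ of slope at most $d$; their pointwise maximum $\reg(I^t)$ is therefore eventually linear of slope at most $d$, which produces $b$ and the inequality $\le d$.

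I expect the main obstacle to be twofold. First, local cohomology is not finitely generated, so the eventual--linearity lemma does not apply to it verbatim; the honest work is to set up a graded duality over the (non--regular, non--standard) ring $\mathcal{R}$ so that the dual modules are genuinely finitely generated and bigraded, and to track the internal grading faithfully through this duality. Second, and more delicate, is pinning the slope to the intrinsic value $d$ rather than merely to $\max_i\deg f_i$. The upper bound follows cleanly from the reduction $I^t=J^{\,t-t_0}I^{t_0}$, while the matching lower bound is best read off from the fiber cone $F(I)=\bigoplus_{t}I^t/\mm I^t$, a standard graded $k$--algebra whose top internal degree in $t$--degree $t$ equals $\maxdegree(I^t)\le\reg(I^t)$; showing that this grows with slope exactly $d$ --- equivalently, that the extremal $d$ is the true asymptotic top degree of a minimal generator of $I^t$ --- is where the real content of the theorem lies.
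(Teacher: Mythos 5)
Your reductions at the start are fine: $H^i_{\mm}(\mathcal{R}(I))_t\cong H^i_{\mm}(I^t)$, the stability argument showing that $(I_{\le d})I^{t-1}=I^t$ propagates to all larger $t$, hence $I^t=J^{t-t_0}I^{t_0}$ with $J=I_{\le d}$, and the observation that a maximum of finitely many eventually linear functions is eventually linear. But the ``engine'' you propose as the technical core is false, and the gap is not repairable along the lines you indicate. It is \emph{not} true that for a finitely generated graded module $L$ over a standard graded $S$-algebra each function $s\mapsto \en(H^i_{\mm}(L)_s)$ is eventually linear. Take $L=A=\mathcal{R}(I)$: since $H^1_{\mm}(\widetilde{I^t})=0$ and $H^i_{\mm}(\widetilde{I^t})\cong H^i_{\mm}(I^t)$ for $i\ge 2$ (where $\widetilde{I^t}$ denotes the saturation), one has $\reg(\widetilde{I^t})=\max_{i\ge 2}\{\en(H^i_{\mm}(I^t))+i\}$; yet Cutkosky, and Cutkosky--Ein--Lazarsfeld, constructed ideals for which $\lim_t \reg(\widetilde{I^t})/t$ is \emph{irrational}, so the individual cohomological ends cannot all be eventually linear. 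Eventual linearity of $\reg(I^t)$ survives only because the maximum happens to be governed by the generating-degree/$H^1$ part, which is precisely what a proof must isolate. Your proposed duality fix also fails structurally: applying graded local duality over $S$ degreewise produces $\bigoplus_t \mathrm{Ext}^{n-i}_S(I^t,\omega_S)$, but multiplication by an element of $\mathcal{R}(I)_s$ induces maps $\mathrm{Ext}^{n-i}_S(I^{t+s},\omega_S)\rightarrow \mathrm{Ext}^{n-i}_S(I^t,\omega_S)$ --- the wrong direction. These duals form an inverse system, not a finitely generated bigraded module, so no Noetherian induction on generators can be run on them.

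The proofs the paper cites (Kodiyalam; Cutkosky--Herzog--Trung) avoid local cohomology entirely and use instead the Betti-number characterization of regularity quoted just before the statement: $\mathrm{Tor}^S_i(\mathcal{R}(I),k)\cong \bigoplus_t \mathrm{Tor}^S_i(I^t,k)$, computed by the Koszul complex on $x_1,\dots,x_n$ tensored with $\mathcal{R}(I)$, \emph{is} a finitely generated bigraded module over the special fiber ring $\mathcal{F}(I)$ (equivalently over $B=k[T_1,\dots,T_r]$, with $T_j$ of bidegree $(d_j,1)$), because it is killed by $\mm$. For such genuinely finitely generated bigraded modules the elementary lemma holds --- the top internal degree of the $t$-th graded piece is eventually linear in $t$ with slope among the $d_j$, proved by a filtration/Noetherian induction of exactly the kind you gesture at --- and then $\reg(I^t)=\max_{0\le i\le n}\{\maxdegree(\mathrm{Tor}^S_i(I^t,k))-i\}$ is a maximum of finitely many eventually linear functions. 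The slope is pinned to $d$ as you anticipated in your final paragraph: the upper bound comes from your reduction $I^t=J^{t-t_0}I^{t_0}$ (restrict scalars to $\mathcal{R}(J)$, whose algebra generators have internal degree at most $d$), and the lower bound from $\maxdegree(I^t)\le \reg(I^t)$ together with the same lemma applied to $\mathcal{F}(I)$ and the minimality in the definition of $d$. So the correct repair of your sketch is to replace local cohomology by Tor, where finite generation is actually available; that last paragraph of yours contains the right lower-bound idea, but the middle of your argument, as written, rests on a false statement.
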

Notice that when $I$ is equigenerated the number $d$ in the above theorem is the degree of the generators of $I$.  We call smallest such $t_0$ as the stabilization index of $I$ and denote it by $\Stab (I)$.  In \cite{EHU}, authors showed that if $I$ is a linearly presented $\mm$-primary ideal, then the powers of $I$ eventually have a linear resolution.
\begin{thm}[\cite{EHU}]
Let $S= k[x_1,\dots ,x_n]$ and $I$ be an ideal of $S$. If $I$ is  a linearly presented $\mm$-primary ideal generated in degree $d$, then $\reg(I^t) = td$ for $t\gg 0$. 
\end{thm}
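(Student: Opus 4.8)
The plan is to squeeze the sharp asymptotic value out of the general linearity theorem together with the $\mm$-primary hypothesis, reducing the whole statement to a single inclusion of ideals that records the birationality already obtained in this section. First I would apply Theorem \ref{linearity powers}: since a linearly presented ideal is generated in a single degree, the number $d$ there is exactly the generator degree, so $\reg(I^t)=dt+b$ for all $t\gg 0$ and some integer $b$. Everything then reduces to proving $b=0$.

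Next I would translate $b$ into a statement about the colength of powers. As $I$ is $\mm$-primary, each $S/I^t$ has finite length, so the local cohomology sequence of $0\to I^t\to S\to S/I^t\to 0$ gives $H^0_\mm(I^t)=0$, $H^1_\mm(I^t)\cong S/I^t$, $H^i_\mm(I^t)=0$ for $2\le i\le n-1$, and $H^n_\mm(I^t)\cong H^n_\mm(S)$. Using $\reg(M)=\max_i\{\en(H^i_\mm(M))+i\}$ and $\en(H^n_\mm(S))=-n$, this yields $\reg(I^t)=\max\{\en(S/I^t)+1,\,0\}$, hence $\reg(I^t)=\en(S/I^t)+1$ for large $t$. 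Because $I\subseteq\mm^d$ forces $I^t\subseteq\mm^{dt}$ and thus $\en(S/I^t)\ge dt-1$ (equivalently $b\ge 0$), the identity $b=0$ is equivalent to $\en(S/I^t)=dt-1$, that is, to the inclusion $\mm^{dt}\subseteq I^t$ — equivalently $I^t=\mm^{dt}$ — for $t\gg 0$.

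To attack this inclusion I would pass to the special fiber ring. The component $(I^t)_{dt}$ equals $\mathcal{F}(I)_t$, the degree-$t$ piece of $\mathcal{F}(I)=k[f_1,\dots,f_r]\subseteq S^{(d)}=\bigoplus_t S_{dt}$, and $\mm^{dt}\subseteq I^t$ is precisely the equality $\mathcal{F}(I)_t=(S^{(d)})_t$. By Lemmas \ref{fiber} and \ref{rank} the general fiber of $\pi_2:\Gamma\to W$ is a single reduced point, so $\phi$ is birational onto $W=\proj\mathcal{F}(I)$; moreover the analytic spread of the $\mm$-primary ideal $I$ is $n$, so $\dim\mathcal{F}(I)=n=\dim S^{(d)}$. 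Thus $\mathcal{F}(I)\hookrightarrow S^{(d)}$ is a birational inclusion of $n$-dimensional graded domains, and the goal becomes showing that the cokernel $S^{(d)}/\mathcal{F}(I)$ has finite length.

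This last step is where I expect the main obstacle to lie, and it is exactly where the linear presentation must be used: a purely integral-closure argument only recovers the leading term. Indeed one checks $\overline{I}=\mm^d$, so by Brian\c{c}on--Skoda $\mm^{dt}=\overline{I^t}\subseteq I^{t-n+1}$, which proves $\mm^{d(t+n-1)}\subseteq I^t$ and hence only the weaker bound $b\le d(n-1)$. To force the exact constant $b=0$ one must rule out any contribution of $S^{(d)}/\mathcal{F}(I)$ away from the irrelevant ideal, and I expect this to follow from controlling the bigraded local cohomology of the Rees algebra $\mathcal{R}(I)$, exploiting that the linear presentation makes $\text{Sym}(I)\to\mathcal{R}(I)$ close to an isomorphism (in the spirit of the regularity-of-$\mathrm{Tor}$ estimates of \cite{EHU}). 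This sharp vanishing is the delicate heart of the statement, and everything above is designed to isolate it.
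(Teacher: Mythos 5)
The paper itself contains no proof of this statement---it is quoted from \cite{EHU}---so your proposal can only be measured against the surrounding lemmas and the cited literature. Your reductions are all sound: Theorem \ref{linearity powers} gives $\reg(I^t)=dt+b$ for $t\gg 0$; the local cohomology computation for the $\mm$-primary ideal $I^t$ correctly yields $\reg(I^t)=\max\{\en(S/I^t)+1,0\}$, so $b=0$ is equivalent to $I^t=\mm^{dt}$ for $t\gg 0$; the identification $(I^t)_{dt}=\mathcal{F}(I)_t$ and the fact that both $I^t$ and $\mm^{dt}$ are generated in degree $dt$ correctly convert this to $\mathcal{F}(I)_t=(S^{(d)})_t$ in large degrees; and your side remark $\overline{I}=\mm^d$ checks out (one has $e(\mm^d)=d^n\leq e(I)\leq e(Q)=d^n$ for $Q$ generated by $n$ general degree-$d$ forms in $I$, then Rees' multiplicity theorem applies), so Brian\c{c}on--Skoda indeed only gives $b\leq d(n-1)$. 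But the proposal stops exactly at the decisive point: you reduce everything to the finite length of $S^{(d)}/\mathcal{F}(I)$ and then only ``expect'' this to follow from bigraded local cohomology of the Rees algebra. Birationality plus equal dimension cannot supply this by itself: a finite birational extension of $n$-dimensional graded domains can have cokernel supported on a conductor locus of dimension up to $n-1$, which is precisely the scenario producing $b>0$. The admitted gap is therefore the entire content of the theorem, not a technical footnote.

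The missing step is, however, within reach of the very lemmas you invoke, provided you use their full strength: Lemmas \ref{fiber} and \ref{rank} hold at \emph{every} point $p\in W$, not merely at a general one. Since $I$ is $\mm$-primary and linearly presented, the symmetric algebra scheme-defines $\Gamma$, so the scheme fiber of $\pi_2:\Gamma\to W$ over $p$ is the projective linear space cut out by $\Theta(p)\,x=0$, and rank $n-1$ makes it a single reduced point rational over $k(p)$. As $\pi_2$ is proper with finite fibers it is finite; Nakayama applied to $\mathcal{O}_W\to(\pi_2)_*\mathcal{O}_\Gamma$ (surjective modulo every maximal ideal because each fiber is one reduced point, injective because $W$ is integral and $\pi_2$ dominates it) shows $\pi_2$ is a closed immersion, hence an isomorphism onto $W$. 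Since $V(I)=\emptyset$ in $\mathbb{P}^{n-1}$, the blowup $\Gamma$ is $\mathbb{P}^{n-1}$ itself, so $W\cong\mathbb{P}^{n-1}$ with $\mathcal{O}_W(1)$ pulling back to $\mathcal{O}_{\mathbb{P}^{n-1}}(d)$; consequently $\mathcal{F}(I)_t=(B/I(W))_t=H^0(W,\mathcal{O}_W(t))=S_{dt}$ for $t\gg 0$, i.e.\ $I^t=\mm^{dt}$ and $b=0$. One caution justifying this completion: the Remark following Lemma \ref{rank} derives the reduced-point fibers from the very theorem you are proving (via \cite{EH}), which would be circular; your route through Lemmas \ref{fiber} and \ref{rank}, which are proved independently, avoids that circle---you simply needed to push it from the generic point to all of $W$ instead of deferring to an unproven cohomological vanishing.
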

In addition, they conjectured an upper bound for the stabilization index of $I$.
\begin{conj}\label{main conjecture}\cite[Conjecture 1.1]{EHU}
Let $S= k[x_1,\dots ,x_n]$ and $I$ be an ideal of $S$. If $I$ is a linearly presented $\mm$-primary ideal generated in degree $d$, then $I^{n-1}= \mm^{d(n-1)}$. In other word, 
$$
\reg(I^{n-1}) = (n-1)d.
$$
\end{conj}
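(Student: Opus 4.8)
Since the statement is a conjecture, the plan is to prove it \emph{conditionally}, reducing it to the linearity of the resolution of $I_n(\Theta)$ (Conjecture \ref{conj-linear resolution}), and to isolate that linearity as the one genuinely hard input. The first step is to translate the conjecture into a Hilbert-function statement about the image $W$. Because $I$ is equigenerated in degree $d$ one has $(I^t)_j=0$ for $j<td$ and $(\mathfrak{m} I^t)_{td}=0$, so the graded piece $\mathcal F(I)_t$ of the special fiber ring is exactly $(I^t)_{td}$; under the identification $\mathcal F(I)\cong B/I(W)$ this gives $\dim_k(B/I(W))_t=\dim_k(I^t)_{td}$. As $I^{n-1}\subseteq\mathfrak{m}^{d(n-1)}$ with $I^{n-1}$ generated in degree $d(n-1)$, the equality $I^{n-1}=\mathfrak{m}^{d(n-1)}$ is equivalent to the single numerical identity $(I^{n-1})_{d(n-1)}=S_{d(n-1)}$, that is,
$$
H_W(n-1)=\binom{d(n-1)+n-1}{n-1}.
$$

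Second, I would identify the right-hand side as the Hilbert polynomial of $W$ evaluated at $n-1$. The proof of Theorem \ref{main-sat} establishes that $\pi\colon\Gamma\to W$ is an isomorphism; since $\Gamma\cong\mathbb P^{n-1}$ via $\pi_1$, we get $W\cong\mathbb P^{n-1}$ with $\mathcal O_W(1)$ pulling back to $\mathcal O_{\mathbb P^{n-1}}(d)$, whence the Hilbert polynomial is $P_W(t)=\chi(\mathcal O_{\mathbb P^{n-1}}(dt))=\binom{dt+n-1}{n-1}$. Thus $\binom{d(n-1)+n-1}{n-1}=P_W(n-1)$, and the conjecture becomes the assertion that the Hilbert function of $W$ already agrees with its Hilbert polynomial at $t=n-1$, i.e. $H_W(n-1)=P_W(n-1)$.

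The third step controls this agreement by regularity. Writing $\mathfrak{m}_B$ for the homogeneous maximal ideal of $B$, the standard comparison $H_W(t)-P_W(t)=\sum_{i\ge 0}(-1)^i\dim_k H^i_{\mathfrak{m}_B}(B/I(W))_t$ reduces the problem to a vanishing statement. Since $B/I(W)$ is a domain, $H^0_{\mathfrak{m}_B}=0$, and for $i\ge 1$ one has $H^i_{\mathfrak{m}_B}(B/I(W))_t=0$ as soon as $t\ge \reg(B/I(W))$; hence it suffices to prove $\reg(B/I(W))\le n-1$, equivalently $\reg(I(W))\le n$. Here the Main Theorem enters: Theorem \ref{main-sat} gives $I(W)=I_n(\Theta)^{sat}$, so $\reg(I(W))\le\reg(I_n(\Theta))$ because saturation does not increase regularity. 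As $I_n(\Theta)$ is generated in degree $n$, a linear resolution forces $\reg(I_n(\Theta))=n$, yielding $\reg(I(W))\le n$ and the desired vanishing at $t=n-1$, which is the conjecture.

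The reduction above is essentially formal once Theorem \ref{main-sat} is in hand; I expect the main obstacle to be precisely the input $\reg(I_n(\Theta))=n$, i.e. Conjecture \ref{conj-linear resolution}. The natural temptation is to argue that $I_n(\Theta)$, being an ideal of maximal minors of a matrix of linear forms, is resolved by the (linear) Eagon--Northcott complex; but this applies only when the minors have the maximal expected codimension, whereas $\mathrm{ht}\, I_n(\Theta)=\mathrm{ht}\,I(W)=r-n$ is in general strictly smaller, so $\Theta$ is far from generic and its special structure---encoded in the syzygy relations of Lemma \ref{3lemmas}---must be exploited directly. A plausible intermediate target is Conjecture \ref{conj-truncation}, $I_n(\Theta)=I(W)_{\ge n}$, which would tie the resolution of $I_n(\Theta)$ to that of the coordinate ring of $W$ and make the linearity of $I_n(\Theta)$ equivalent to the regularity bound $\reg(I(W))\le n$.
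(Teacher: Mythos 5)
Your proposal is correct as a \emph{conditional} proof (Conjecture \ref{conj-linear resolution} $\Rightarrow$ Conjecture \ref{main conjecture}), which is exactly the status the paper gives this statement in Theorem \ref{Thm- conjectures}(1): the conjecture is not proved unconditionally anywhere in the paper, and your framing matches that. Your skeleton coincides with the paper's in its decisive reduction: linear resolution gives $\reg(I_n(\Theta))=n$, Theorem \ref{main-sat} gives $I(W)=I_n(\Theta)^{sat}$, and $\reg(I^{sat})\le \reg(I)$ yields $\reg I(W)\le n$. Where you genuinely diverge is in converting this regularity bound into $I^{n-1}=\mm^{d(n-1)}$. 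The paper routes this through Proposition \ref{upper bound regularity}(2), i.e.\ the exact sequence $0\rightarrow B/I(W)\rightarrow B'/I(V_n^{(d)})\rightarrow H^1_{\mathbf{T}}(B/I(W))\rightarrow 0$ comparing the fiber ring with the Veronese ring, which identifies $\en H^1_{\mathbf{T}}(B/I(W))=\Stab(I)-1$ and hence $\reg I(W)=\max\{\Stab(I)+1,\reg I(V_n^{(d)})\}$, an exact formula. You instead use the isomorphism $\pi:\Gamma\rightarrow W$ to get $P_W(t)=\binom{dt+n-1}{n-1}$ and apply the Grothendieck--Serre formula $H_W(t)-P_W(t)=\sum_i(-1)^i\dim_k H^i_{\mm_B}(B/I(W))_t$ together with the vanishing $H^i_{\mm_B}(B/I(W))_t=0$ for $i\ge 1$, $t\ge\reg(B/I(W))$. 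These are two packagings of the same cohomological computation (the Veronese ring's Hilbert function equals $\binom{dt+n-1}{n-1}$ in all degrees $t\ge0$, and its Cohen--Macaulayness is what collapses the paper's comparison to the single $H^1$), but yours is leaner for the implication you need, while the paper's exact formula for $\reg I(W)$ is what additionally powers part (2) of Theorem \ref{Thm- conjectures}, the equivalence under Conjecture \ref{conj-truncation}, which your argument does not recover. One small attribution slip: the isomorphism $\pi:\Gamma\rightarrow W$ is not \emph{established} in the proof of Theorem \ref{main-sat}; that proof \emph{uses} it, and its justification in the paper is the Remark following Lemma \ref{rank} (fibers and stalks of $\pi_2$ have regularity zero, citing \cite{EH} and \cite{CH}), so you should cite that remark rather than the theorem's proof --- with that adjustment, every step you make is available within the paper and your reduction is sound.
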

\begin{rem}\label{proved cases}
Eisenbud, Huneke and Ulrich  in \cite{EHU} proved this conjecture when $n=3$ or $I$ is a monomial ideal.
\end{rem}
\begin{rem}\label{rem- Hilbert function}
Since the only $\mm$-primary ideals with linear resolution are the powers of $\mm$, for an $\mm$-primary ideal $I$ generated in degree $d$,
$$
\Stab(I) = \min \lbrace t \, \vert  h_{I^t}(dt)= h_{\mm^t}(dt)\rbrace,
$$
where $h_M(d):= \dim _{k} M_d$ is the Hilbert funtion of $M$ at degree $d$. 
\end{rem}
The following proposition provide a connection between   regularity of powers of an ideal $I=(f_1,\dots,f_r)$ and the image of $\phi : \mathbb{P}^{n-1}\dashrightarrow \mathbb{P}^{r-1}$ defined by $f_1,\dots,f_r$.
\begin{prop}\label{upper bound regularity}
Let $S= k[x_1,\dots ,x_n]$ and $I$ be an ideal of $S$. Adopting the Notation \ref{Notation}, If $I$ is a linearly presented $\mm$-primary ideal generated in degree $d$, then
\begin{itemize}
\item[(1)] $W$ is smooth of dimension $n-1$, and $e(W) = d^{r-1}$.
\item[(2)]
$
\reg I(W) = \max\lbrace \Stab(I)+1, \reg I(V_n^{(d)}) \rbrace = \max \{ \Stab(I)+1, n- \lceil \dfrac{n}{d}\rceil \}
$
\end{itemize}
where $V_n^{(d)}$ is the Veronese embedding of degree $d$. 
\end{prop}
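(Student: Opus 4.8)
The plan is to prove \textbf{(1)} from the scheme-theoretic description of $W$ obtained in Theorem \ref{main-sat}, and to prove \textbf{(2)} by comparing the homogeneous coordinate ring of $W$ with that of the Veronese variety through a single short exact sequence and its local cohomology. For \textbf{(1)}: since $I$ is $\mm$-primary its base locus $V(I)\subseteq\mathbb{P}^{n-1}$ is empty, so $\phi$ is a morphism defined everywhere and its graph is already closed, with $\pi_1\colon\Gamma\to\mathbb{P}^{n-1}$ an isomorphism. By Lemmas \ref{fiber} and \ref{rank} each scheme-theoretic fiber of $\pi_2$ is a single reduced point, and the stalk computation in the proof of Theorem \ref{main-sat} shows that $\pi_2\colon\Gamma\to W$ is an isomorphism. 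Hence $\phi=\pi_2\circ\pi_1^{-1}$ is a closed immersion $\mathbb{P}^{n-1}\xrightarrow{\ \sim\ }W$, so $W$ is smooth of dimension $n-1$. For the degree I would pull back the hyperplane class $H$ of $\mathbb{P}^{r-1}$; since $\phi$ is given by forms of degree $d$ one has $\phi^{*}H=d\,h$ with $h$ the hyperplane class of $\mathbb{P}^{n-1}$, whence
$$
e(W)=(\phi^{*}H)^{\,n-1}=(d\,h)^{\,n-1}=d^{\,n-1}.
$$

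For \textbf{(2)} the homogeneous coordinate ring of $W$ is the special fiber ring $B/I(W)\cong\mathcal{F}(I)=k[f_1,\dots,f_r]\subseteq S$, whose degree-$t$ piece is $(I^t)_{dt}$, of dimension $h_{I^t}(dt)$. Inside $S$ this is a subring of the $d$-th Veronese ring $A:=\bigoplus_{t\ge0}S_{dt}$, the coordinate ring of $V_n^{(d)}$, and the inclusion is finite since the two rings agree in all large degrees. This gives a short exact sequence of graded modules
$$
0\longrightarrow\mathcal{F}(I)\longrightarrow A\longrightarrow Q\longrightarrow0,\qquad Q_t=S_{dt}/(I^t)_{dt}.
$$
By Remark \ref{rem- Hilbert function}, $Q_t=0$ precisely when $(I^t)_{dt}=S_{dt}$; I would first verify that this vanishing is upward closed (from $I^t=\mm^{dt}$ one deduces $I^{t+1}=\mm^{d(t+1)}$), so that $Q$ has finite length with top nonzero degree $\en(Q)=\Stab(I)-1$. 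Since $\reg I(W)=\reg(B/I(W))+1$, it remains to compute $\reg\mathcal{F}(I)$.

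Now I would run the local cohomology long exact sequence of the displayed sequence with respect to the graded maximal ideal $\mm$. As $\mathcal{F}(I)$ is a domain of dimension $n$ it has positive depth, so $H^0_{\mm}(\mathcal{F}(I))=0$; as $A$ is Cohen-Macaulay of dimension $n$ we have $H^i_{\mm}(A)=0$ for $i<n$; and $Q$ being of finite length, $H^i_{\mm}(Q)=0$ for $i>0$. Reading off the sequence gives
$$
H^1_{\mm}(\mathcal{F}(I))\cong Q,\qquad H^i_{\mm}(\mathcal{F}(I))\cong H^i_{\mm}(A)\ \ (i\ge2).
$$
Hence $\reg\mathcal{F}(I)=\max_i\{\en(H^i_{\mm}(\mathcal{F}(I)))+i\}$ has exactly two possible contributions: $\en(Q)+1=\Stab(I)$, and $\en(H^n_{\mm}(A))+n$. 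For the latter, $H^n_{\mm}(A)_t\cong H^{n-1}(\mathbb{P}^{n-1},\mathcal{O}(dt))$, which by Serre duality is nonzero exactly for $t\le-\lceil n/d\rceil$, so this contribution equals $n-\lceil n/d\rceil$, the regularity of the coordinate ring of $V_n^{(d)}$. Therefore $\reg(B/I(W))=\max\{\Stab(I),\,n-\lceil n/d\rceil\}$; equivalently $\reg I(W)=\max\{\Stab(I)+1,\ \reg I(V_n^{(d)})\}$, the two contributions being the stabilization index and the regularity of the Veronese, as asserted.

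\textbf{Main obstacle.} Part \textbf{(1)} is essentially a repackaging of Theorem \ref{main-sat} together with a one-line intersection computation. The real work lies in the local-cohomology comparison of \textbf{(2)}: establishing the finiteness of $A$ over $\mathcal{F}(I)$ (so that the sequence and the identification of local cohomologies are legitimate), proving that $\{t:Q_t=0\}$ is upward closed so that $\en(Q)=\Stab(I)-1$, and invoking the Cohen-Macaulayness of the Veronese ring together with its exact $a$-invariant $-\lceil n/d\rceil$. Once these are in place, the contributions $\Stab(I)$ and $n-\lceil n/d\rceil$ drop out of $\reg(B/I(W))$ and assemble into the stated maximum.
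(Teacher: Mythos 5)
Your treatment of part (2) is correct and is in substance the paper's own argument: both proofs compare $B/I(W)\cong\mathcal{F}(I)$ with the Veronese coordinate ring $B'/I(V_n^{(d)})$ through the same degreewise inclusion with finite-length cokernel. The paper packages your long exact sequence slightly differently --- it observes that the inclusion is an isomorphism locally on the punctured spectrum, so that $H^i_{\mathbf{T}}(B/I(W))\cong H^i_{\mathbf{T}'}(B'/I(V_n^{(d)}))$ for $i\geq 2$ and the cokernel is identified outright with $H^1_{\mathbf{T}}(B/I(W))$ --- and then reads off $\en H^1_{\mathbf{T}}(B/I(W))=\Stab(I)-1$ from exactly the Hilbert-function comparison you use via Remark \ref{rem- Hilbert function}. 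Your added details (the upward-closedness of $\{t: I^t=\mm^{dt}\}$, which indeed follows since $I^t=\mm^{dt}\subseteq I$ forces $(I)_{d(t+1)}=S_{d(t+1)}$ and hence $(I^{t+1})_{d(t+1)}=(I_d)^t\cdot I_d=S_{dt}\cdot I_d=S_{d(t+1)}$, and the Serre-duality computation of the $a$-invariant $-\lceil n/d\rceil$ of the Veronese ring) fill in steps the paper leaves implicit. The genuine divergence is part (1): the paper disposes of it by citing \cite[Proposition 1.7(b)]{COR}, whereas you give a self-contained argument resting on the isomorphism $\pi_2\colon\Gamma\to W$, which does follow from the localization computation inside the proof of Theorem \ref{main-sat} (or from the Remark after Lemma \ref{rank} on fibers and stalks having regularity zero), so your route is legitimate. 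Two discrepancies with the literal statement should be flagged rather than silently absorbed, and in both cases your computation, not the statement, is the correct one: your degree comes out as $e(W)=d^{n-1}$, not $d^{r-1}$ (for $I=\mm^d$, $n=2$, the image is the rational normal curve of degree $d$ in $\mathbb{P}^d$ with $r=d+1$, so the stated exponent must be a typo); and your argument yields $\reg I(W)=\max\{\Stab(I)+1,\,n-\lceil n/d\rceil+1\}$, which agrees with the first equality $\max\{\Stab(I)+1,\reg I(V_n^{(d)})\}$ but shows the final displayed expression in the proposition is off by one, since $n-\lceil n/d\rceil$ is the regularity of the Veronese coordinate ring $B'/I(V_n^{(d)})$ rather than of its defining ideal.
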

\begin{proof}
Part $(1)$ follows from \cite[Proposition 1.7(b)]{COR}. For proving part $(2)$ we use the definition of regularity via local cohomologies. 
Let $I(W)\subset B= k[\mathbf{T}]$ and $I(V_n^{(d)})
\subset B'=k[\mathbf{T}']$ with $B\subset B'$ and $I(W)=I(V_n^{(d)})\cap B$. 
As the natural map $ B/I(W)\rightarrow B'/I(V_n^{(d)})$ is an isomorphism locally on the punctured spectrum, $H^i_{\mathbf{T}}(B/I(W))\cong H^i_{\mathbf{T}}(B'/I(V_n^{(d)}))\cong H^i_{\mathbf{T}'}(B'/I(V_n^{(d)}))$ for $i\geq 2$ and there is an exact sequence:
$$
0\rightarrow B/I(W)\rightarrow B'/I(V)\rightarrow H^1_{\mathbf{T}}(B/I(W))\rightarrow 0.
$$
Since $h_{B/I(W)}(a)= h_{I^a}(da)$ and $h_{B/I(V)}(a)= h_{\mm^{da}}(da)$, by Remark \ref{rem- Hilbert function} and above exact sequence,  $H^1_{\mathbf{T}}(B/I(W))=0$  if and only if $i\geq \Stab (I)$. In other words,  $\text{end} H^1_{\mathbf{T}}(B/I(W))= \Stab (I)-1$. The  assertion follows from the definition of regularity and the fact that $\reg I(W)= \reg B/I(W)+1$.
\end{proof}
Conjecture \ref{main conjecture} was the initial motivation of this work, we end this paper by showing that how our conjectures are related to Conjecture \ref{main conjecture}.
\begin{thm}\label{Thm- conjectures}
Let $S= k[x_1,\dots ,x_n]$ and $I$ be an ideal of $S$. Adopting the Notation \ref{Notation}, If $I$ is a linearly presented $\mm$-primary ideal generated in degree $d$, then
\begin{itemize}
\item[$(1)$] Conjecture \ref{main conjecture} follows from Conjecture \ref{conj-linear resolution}.
\item[$(2)$] If Conjecture \ref{conj-truncation} holds, then Conjecture \ref{conj-linear resolution} and Conjecture \ref{main conjecture} are equivalent.
\end{itemize}
\end{thm}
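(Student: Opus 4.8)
The plan is to funnel both parts through the single numerical inequality $\reg I(W)\le n$, using the regularity formula of Proposition \ref{upper bound regularity}(2) as a dictionary between $\Stab(I)$ and $\reg I(W)$, and combining it with two standard facts: saturation does not increase regularity, and the truncation of a module at a degree above its regularity has a linear resolution. Throughout I would use that $I_n(\Theta)$ is generated in degree $n$ — its generators are the $n\times n$ minors of a matrix whose entries are linear in the $T_i$ — so that Conjecture \ref{conj-linear resolution} is literally the statement $\reg I_n(\Theta)=n$.

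For part $(1)$ I would first assume Conjecture \ref{conj-linear resolution}, hence $\reg I_n(\Theta)=n$, and compare $I_n(\Theta)$ with its saturation. Writing $A=I_n(\Theta)$ and $\bar A=A^{sat}=I(W)$ by Theorem \ref{main-sat}, the quotient $\bar A/A=H^0_{\mm}(B/A)$ has finite length, so the exact sequence $0\to \bar A/A\to B/A\to B/\bar A\to 0$ yields $H^i_{\mm}(B/A)\cong H^i_{\mm}(B/\bar A)$ for $i\ge 1$; this gives $\reg(B/\bar A)\le \reg(B/A)$ and therefore $\reg I(W)\le \reg I_n(\Theta)=n$. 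Feeding this into Proposition \ref{upper bound regularity}(2), and using that $\Stab(I)+1$ is one of the two terms in the maximum, I obtain $\Stab(I)+1\le \reg I(W)\le n$, that is $\Stab(I)\le n-1$, which is exactly Conjecture \ref{main conjecture}.

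For part $(2)$, the implication Conjecture \ref{conj-linear resolution} $\Rightarrow$ Conjecture \ref{main conjecture} is already part $(1)$ and uses nothing from Conjecture \ref{conj-truncation}, so only the reverse implication remains. Assuming Conjecture \ref{main conjecture}, i.e. $\Stab(I)\le n-1$, Proposition \ref{upper bound regularity}(2) gives $\reg I(W)=\max\{\Stab(I)+1,\,n-\lceil n/d\rceil\}\le n$, so $I(W)$ is $n$-regular. The truncation $I(W)_{\ge n}$ is then generated in degree $n$ with a linear resolution (the standard fact that $M_{\ge m}$ has a linear resolution whenever $m\ge \reg M$). Invoking Conjecture \ref{conj-truncation}, $I_n(\Theta)=I(W)_{\ge n}$, so $I_n(\Theta)$ has a linear resolution, which is Conjecture \ref{conj-linear resolution}.

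I do not expect a serious obstacle: this is largely an organizational result, with all the analytic content already carried by Theorem \ref{main-sat} and Proposition \ref{upper bound regularity}(2). The two points that require genuine care are, first, that the regularity comparison under saturation must be run through local cohomology rather than Betti numbers, since $A$ and $\bar A$ differ by a finite-length module; and second, the observation that $n$ is exactly the regularity threshold — so that $\reg I(W)\le n$ is simultaneously \emph{forced} by the linear resolution of $I_n(\Theta)$ and \emph{sufficient} to trigger the truncation argument. It is this coincidence of thresholds, together with Conjecture \ref{conj-truncation} identifying $I_n(\Theta)$ with $I(W)_{\ge n}$, that makes the equivalence in part $(2)$ tight.
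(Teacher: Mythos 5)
Your proposal is correct and follows essentially the same route as the paper: both parts are funneled through the inequality $\reg I(W)\le n$, using Theorem \ref{main-sat} together with the fact that saturation does not increase regularity for part $(1)$, and Proposition \ref{upper bound regularity}(2) plus the standard truncation fact ($M_{\ge m}$ has a linear resolution when $m\ge\reg M$) for part $(2)$. The only difference is that you spell out, via the local cohomology exact sequence $0\to \bar A/A\to B/A\to B/\bar A\to 0$, the step $\reg I(W)\le \reg I_n(\Theta)$ that the paper states without justification.
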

\begin{proof}
$(1)$ Assume $I_n(\Theta)$ has a linear resolution, i.e. $\reg(I_n(\Theta))=n$. By Theorem \ref{main-sat}, $I_n(\Theta)^{sat}= I(W)
$.  Hence 
$$
\reg I(W)\leq  \reg(I_n(\Theta)) \leq n.
$$  
As $\reg(I(V_n^{(d)})=n$,  Proposition \ref{upper bound regularity} implies that  $\Stab(I)\leq n-1$. 

$(2)$ First we should note that by Proposition \ref{upper bound regularity}, $\Stab(I)\leq n-1$ if and only if $\reg(I(W))\leq n$. As we assume $I_n(\Theta)= I(W)_{\geq n}$,  $\reg(I(W))\leq n$ if and only if $I_n(\Theta)$ has a linear resolution.
\end{proof}

\bibliographystyle{acm} 
\bibliography{bib}

\end{document}